\documentclass[11pt,reqno]{amsart}

\usepackage{geometry}
\geometry{ headsep=0.25in, headheight=0in,  textheight = 9in, lmargin=1in,rmargin=1in}
\usepackage[utf8]{inputenc}
\usepackage{amstext,latexsym,amsbsy,amsmath,amssymb,amsthm,mathtools,relsize,geometry,enumerate}
\usepackage{hyperref}
\hypersetup{
  colorlinks   = true, %Colours links instead of ugly boxes
  urlcolor     = blue, %Colour for external hyperlinks
  linkcolor    = red, %Colour of internal links
  citecolor   = red %Colour of citations
}
\theoremstyle{definition}
% Set up the images/graphics package
\usepackage{graphicx}
\setkeys{Gin}{width=\linewidth,totalheight=\textheight,keepaspectratio}
\graphicspath{{./images/}}

% The following package makes prettier tables.  We're all about the bling!
\usepackage{multicol}
\usepackage{booktabs}
\usepackage{mathrsfs}\usepackage{xcolor}
\usepackage[normalem]{ulem}
\usepackage{cancel}

\newcommand{\rbb}{\mathbb{R}}

\newcommand{\Fcal}{\mathcal{F}}

\newcommand{\Sc}{\mathcal{S}}

\newcommand{\Kcos}{\mathcal{K}_{\cos}}
\newcommand{\Ksin}{\mathcal{K}_{\sin}}
\newcommand{\rhat}{\widehat{r}}

\newcommand{\bbR}{\mathbb{R}}

\newcommand{\bbC}{\mathbb{C}}
\newcommand{\bbE}{\mathbb{E}}

\renewcommand{\d}{\mathrm{d}}

\newcommand{\f}{\varphi}

\newcommand{\E}[1]{\mathbb{E}\left[#1\right]}
\newcommand{\Enone}[1]{\mathbb{E}#1}

\newcommand{\close}{\!\!\!}

%\newcommand{\Calp1}{C_{\alpha,1}}
%\newcommand{\Calp2}{C_{\alpha,2}}

%%%%%%%%%%%%%%%%

%%%%%%%%%%%%%%%%%

\numberwithin{equation}{section}
\theoremstyle{definition}
\theoremstyle{plain}
\newtheorem{theorem}{Theorem}[section]

\newtheorem{lemma}[theorem]{Lemma}
\newtheorem{assumption}[theorem]{Assumption}
\newtheorem{proposition}[theorem]{Proposition}
\newtheorem{definition}[theorem]{Definition}

\newtheorem{remark}[theorem]{Remark}

%\newtheorem{conjecture}{Conjecture}[section]

%%%%%%%%%%%%%%%%%%%%%%%%%%%%%%%%%%%%%%%%%%%%%%%%%%%%%%%%%%%%
% Beginning of the actual document!!!
\numberwithin{equation}{section}

\title[Asymptotic analysis of MSD]{Asymptotic analysis of the mean squared displacement under fractional memory kernels}

\author[G.~Didier]{Gustavo~Didier$^1$}
\author[H.~Nguyen]{Hung~D.~Nguyen$^2$}

\thanks{\noindent \hspace{-0.52cm} $^1$ Department of Mathematics, Tulane University, New Orleans, LA 70118, USA}

\thanks{\noindent $^2$ Department of Mathematics, University of California, Los Angeles, CA, 90095, USA}

\calclayout

\begin{document}
\begin{abstract}
The generalized Langevin equation (GLE) is a universal model for particle velocity in a viscoelastic medium. In this paper, we consider the GLE family with fractional memory kernels. We show that, in the critical regime where the memory kernel decays like $1/t$ for large $t$, the mean squared displacement (MSD) of particle motion grows linearly in time up to a slowly varying (logarithm) term. Moreover, we establish the well-posedness of the GLE in this regime. This solves an open question from~\cite{mckinley2018anomalous} and completes the answer to the conjecture put forward in~\cite{morgado2002relation} on the relationship between memory kernel decay and anomalously diffusive behavior. Under slightly stronger assumptions on the memory kernel, we construct an Abelian-Tauberian framework that leads to robust bounds on the deviation of the MSD around its asymptotic trend. This bridges the gap between the GLE memory kernel and the spectral density of anomalously diffusive particle motion characterized in \cite{didier2017asymptotic}.

%We consider the generalized Langevin equation, which is commonly used to model particle movements in viscoelastic fluids, with power-law decay memories. We show that when the memory tail decays similar to $1/t$ as $t$ tends to infinity, the mean-squared displacement (MSD) of the particle grows nonlinearly by a $\log$ scaling in time. The result answers an open question in~\cite{mckinley2018anomalous} and a conjecture proposed previously in~\cite{morgado2002relation}. With further assumptions on the decaying rate of the memory, we obtain the growing rate of the MSD.
\end{abstract}
\maketitle

%%\pacs{1315, 9440T}
\noindent{\it Keywords\/}: stationary random distributions, Abelian-Tauberian theorems, stochastic differential-integral equations, anomalous diffusion,
mean squared displacement.

\section{Introduction}

The velocity of freely-moving microparticles embedded in viscous, Newtonian fluids is classically modeled by means of a Langevin equation. %This involves the implicit assumption that there is no time correlation between the foreign microparticles and the thermally fluctuating fluid molecules .
However, unlike in a Langevin framework, the presence of elasticity in a non-Newtonian fluid induces time correlation between the foreign microparticle movement and molecular bombardment~\cite{didier2012statistical,didier2017asymptotic,kneller2011generalized,kubo1966fluctuation,mason1995optical,mori1965transport}. The generalized Langevin equation (GLE) was introduced in~\cite{kubo1966fluctuation,mori1965transport} and later popularized in~\cite{mason1995optical} as a universal model for particle velocity in a viscoelastic medium. It is given by the one-dimensional stochastic-integro-differential equation \cite{chandler1987introduction,goychuk2012viscoelastic,hohenegger2017fluid,
indei2012treating,mckinley2018anomalous,zwanzig2001nonequilibrium}
\begin{equation}\label{eq:gle:full}
m\, \dot{V}(t)	=  -\gamma V(t)- \beta \int_{-\infty}^t \!\!\!\! K(t-s) V(s) \d s + \sqrt{\beta} F(t) \d t +\sqrt{2\gamma}\dot{W}(t).
\end{equation}
In \eqref{eq:gle:full}, $m$ is the particle's mass, $\gamma$ and $\beta$ are, respectively, the viscous and elastic drag coefficients, $K(t)$ is the memory kernel that reflects the drag impact of the surrounding media on the particle over time, and $W(t)$ is the standard Brownian motion. The term $F(t)$ is a stationary, Gaussian process satisfying the so-named fluctuation-dissipation relationship
\begin{equation} \label{eqn:fluctuation-dissipation}
\E{F(t)F(s)}=K|t-s|,
\end{equation}
a balance-of-force condition originally formulated in~\cite{kubo1966fluctuation,onsager1931reciprocal}. %For a more detailed derivation of the GLE in literature, we refer to for example~\cite{chandler1987introduction,goychuk2012viscoelastic,indei2012treating,zwanzig2001nonequilibrium}.

%One of the reasons that the GLE has been widely used is because of a distinct behavior that is called \emph{anomalous diffusion}.

The GLE is a model of \textit{anomalous diffusion}, a topic that has been the focus of intensive research efforts in the modern biophysical literature (e.g., \cite{saxton1994anomalous,saxton1996anomalous,levine2000one,metzler:klafter:2000,sokolov2008statistics,meroz:sokolov:2015,ghosh:cherstvy:grebenkov:metzler:2016}). The physical definition of anomalous diffusion is based on the behavior over time of the (ensemble) \textit{mean squared displacement} (MSD) $\E{X(t)^2}$ of the observed particle. More precisely, let $X(t)=\int_0^t V(s)\d s$ be the particle position process, where $V(t)$ is the particle velocity process in~\eqref{eq:gle:full}. Then, the particle is said to be asymptotically
\begin{align*}
\begin{cases}
\text{subdiffusive}, \\ \text{diffusive}, \\ \text{superdiffusive},
\end{cases}
\text{if}\,\,\E{X(t)^2}\sim t^\alpha\,\, \text{as}\,\, t\to\infty\,\,\text{for}\,\,
\alpha\begin{cases}\in(0,1),\\ =1,\\ \in(1,\infty),
\end{cases}
\end{align*}
where we write $f(t)\sim g(t)$ as $t\to\infty$ whenever $f(t)/g(t)\to c\in(0,\infty)$. While diffusion ($\alpha=1$) is usually observed in single particle tracking experiments in viscous fluids \cite{hill2014biophysical}, subdiffusion ($0<\alpha<1$) is often detected in viscoelastic fluids \cite{ernst2012fractional,golding2006physical,hill2014biophysical,zhang:crizer:schoenfisch:hill:didier:2018}.

Since the earliest formulations of the GLE, it was believed that the asymptotic behavior of the microparticle modeled by \eqref{eq:gle:full} is entirely determined by the tail decay of the memory kernel $K$, and that the GLE has subdiffusive solutions. This conjecture was formally proposed in~\cite{morgado2002relation} as
\begin{align}\label{conjecture:morgado}
\text{If there exists}\,\alpha>0\,\text{such that}\, K(t) \sim t^{-\alpha},\,\,
\text{then}\, \E{X(t)^2}\sim t^{\alpha}\,\text{ as}\,\, t\to\infty.
\end{align}
Several authors have tackled the issue of the connection between memory in particle behavior and the asymptotics of the MSD (e.g., \cite{didier2012statistical,kneller2011generalized,kupferman2004fractional}). To the best of our knowledge, the first rigorous results on \eqref{conjecture:morgado} were obtained in~\cite{kou2008stochastic} for the memory kernel instance $K(t)=t^{-\alpha}$, $\alpha\in(0,1)$. Using the explicit form of the associated Fourier transforms, the results confirm that the GLE solution exhibits subdiffusive behavior. More recently, it was shown under mild assumptions that, when $K$ is integrable, the solution of the GLE~\eqref{eq:gle:full} is diffusive; otherwise, if $K(t)\sim t^{-\alpha}$, $\alpha\in(0,1)$, the solution is subdiffusive~\cite{mckinley2018anomalous}. This corroborates the conjecture~\eqref{conjecture:morgado} for the parameter range $0<\alpha<1$, but disproves it for $\alpha>1$ since superdiffusion is unattainable.  %In other words, when $\alpha>1$ and $K(t)\sim t^{-\alpha}$ as $t\to\infty$, the memory kernel $K$ is integrable, which implies diffusion, not superdiffusion according to conjecture~\eqref{conjecture:morgado}.

More generally, for a wide range of physically inspired stochastic differential equations, much of the observed dynamics is based on the relationship between memory kernels and the asymptotic behavior of autocorrelation functions, as well as that of the MSD. Over the last few decades, several authors have established key results on this topic for a number of models such as the Kubo-Mori-Okabe-Langevin equation \cite{inoue1993equations,okabe1979stationary} and the Stokes-Boussinesq-Langevin equation \cite{okabe1986theory}. In \cite{anh2019fractional}, the autocorrelation function for the fractional Stokes-Boussinesq-Klein equation is shown to exhibit power law decay (see also \cite{metzler1999anomalous} and \cite{barkai2000fractional,metzler2000generalized} on the fractional Fokker-Planck and the fractional Klein-Kramers equations, respectively).

%\GDcomment{I just simplified the paragraph you wrote - note that, in an introduction, we don't need to provide details of SDEs that don't appear in our paper. I also wanted to avoid saying explicitly that those papers are ``just formal".} \GD{For a wide range of physically inspired equations related to the GLE, the relationship between memory kernels and the asymptotic behavior of autocorrelation functions, as well as that of the MSD, has been established by a number of authors over the last few decades. Examples include the KMO-Langevin equation \cite{inoue1993equations,okabe1979stationary} \GDcomment{What is KMO?} and the Stokes-Boussinesq-Langevin equation \cite{okabe1986theory}. Also, in \cite{anh2019fractional}, the power law decay of the autocorrelation function is shown for the case of fractional Stokes-Boussinesq-Klein equation (see also \cite{metzler1999anomalous} and \cite{barkai2000fractional,metzler2000generalized} on the fractional Fokker-Planck and the fractional Klein-Kramers equations, respectively).}

%Throughout this note, we will consider the following equation
In this paper, we focus on the distinctively viscoelastic features of \eqref{eq:gle:full} and consider the GLE family given by
\begin{equation} \label{eq:gle}
m\, \dot{V}(t)	=  - \beta \int_{-\infty}^t \!\!\!\! K(t-s) V(s) \d s + \sqrt{\beta} F(t) \d t ,
\end{equation}
%which is a reduced version of~\eqref{eq:gle:full} where $\gamma=0$. The first goal of this note is to fill the gap of the MSD behavior in the remaining
corresponding to $\gamma = 0$ in~\eqref{eq:gle:full} (see also Remark \ref{r:extension_full_GLE}). In the first set of main results, we tackle and solve the problem left open in~\cite{mckinley2018anomalous} by establishing the asymptotic growth rate of the MSD for the case where the memory kernel satisfies $K(t)\sim t^{-1}$ as $t\to\infty$. Because of its unique character, we call this regime \emph{critical}, in contrast with \emph{diffusive} and \emph{subdiffusive} regimes. Conjecture~\eqref{conjecture:morgado} suggests that, in this situation, the MSD grows linearly in time, i.e., $\E{X(t)^2}\sim t $ as $t\to\infty$. However, we show that the MSD is asymptotically linear only up to a slowly varying (logarithm) factor (Theorem~\ref{thm:asymptotic:critical}). Moreover, the peculiar tail behavior of the memory kernel in the critical regime requires Fourier analysis techniques that are different from those in~\cite{mckinley2018anomalous}. In particular, we draw upon an Abelian-type characterization of the memory kernel in the Fourier domain~\cite{inoue1995abel,pitman1968behaviour}. We further extend the broad framework developed in~\cite{mckinley2018anomalous} to establish the well-posedness of~\eqref{eq:gle} (Theorem \ref{thm:well-posed:critical}; see also Remark \ref{r:harmonizable}). The weak solutions are constructed based on the celebrated theory of stationary random distributions~\cite{ito1954stationary}, which is rather flexible and naturally well suited for the GLE framework. %To avoid tedious computation, we will only present the results for~\eqref{eq:gle}. Using a similar arguments, the same results also holds for the full equation~\eqref{eq:gle:full} in this \emph{critical} regime ($K(t)\sim t^{-1}$ as $t\to\infty$)

In the second set of main results, under slightly stronger assumptions we establish the relationship between the memory kernel decay rates and robust bounds on the deviation of the MSD around its asymptotic trend. The problem of characterizing the convergence rate of the MSD or its statistical counterpart, the time-averaged MSD (TAMSD), in different settings has been studied in many works (e.g., \cite{sokolov2008statistics,deng2009ergodic,jeon2010analysis,burnecki:muszkieta:sikora:weron:2012,vestergaard:blainey:flyvbjerg:2014,kepten:weron:sikora:burnecki:garini:2015,sikora:teuerle:wylomanska:grebenkov:2017}). For a fractional Brownian motion $\{B_H(t)\}_{t \in \bbR}$ (fBm), self-similarity leads to the MSD exhibiting \textit{exact} power law scaling $\bbE B_H(t)^2 = \sigma^2 |t|^{\alpha}$, where $\alpha = 2H$ and $H \in (0,1)$ is the so-called Hurst parameter \cite{embrechts:maejima:2002,pipiras:taqqu:2017}. In~\cite{didier2017asymptotic}, for a broad class of Gaussian, stationary increment processes, it is shown that the MSD scales like a power law asymptotically, and that its finite-time deviation from the fBm MSD is generally controlled by the relation
\begin{equation}\label{e:bound_ensemble_MSD}
\Big| \frac{ \E{X(t)^2} }{2D t^{\alpha}} - 1\Big| \leq \frac{C}{t^{\delta}}, \quad \textnormal{large $t$},
\end{equation}
for some diffusivity constant $D > 0$. In \eqref{e:bound_ensemble_MSD}, the deviation parameter $\delta > 0$ is mostly determined by the high frequency components of the particle's motion. Not only does the bound \eqref{e:bound_ensemble_MSD} provide a robust characterization of the MSD and its relation to self-similarity, but also it plays a key role in establishing the weak convergence of TAMSD-based statistics frequently used in biophysical data analysis (cf.\ \cite[Proposition 1 and Corollary 1]{didier2017asymptotic}). %As a consequence, the bound \eqref{e:bound_ensemble_MSD} is determined semiparametrically by the behavior of the spectral density around the origin. %introduced conditions on the spectral density of the concerned processes to provide the rate that their MSD converge to equilibrium.
However, it is not straightforward to translate the required conditions on the spectral density into conditions on the memory kernel of the GLE. In this paper, we tackle this problem and construct a comprehensive Abelian-Tauberian framework that bridges the gap between GLE memory kernel decay and relations of the type \eqref{e:bound_ensemble_MSD} (see \cite{leonenko2013tauberian,leonenko1999limit,yakimiv2005probabilistic,
bingham2008tauberian,leonenko2013tauberian} on the theory and applications of Abelian-Tauberian schemes in one or multiple dimensions). The results require mild conditions and cover all regimes, i.e., critical, diffusive and subdiffusive (Theorems \ref{thm:convergent-rate:diffusion+subdiffusion} and \ref{thm:convergent-rate:critical}).
 %In section~\ref{sec:results} below, we will establish sufficient conditions on $K(t)$ that can predict the \GD{convergence rate} of the MSD. %As mentioned in previous paragraph, one can also establish \GD{convergence rate} for the full equation~\eqref{eq:gle:full} using the same technique as that we are going to present for~\eqref{eq:gle}.

The rest of the paper is organized as follows. In Section~\ref{sec:results}, we state the assumptions and main results of the paper. In Section~\ref{sec:Fourier}, we lay out the Fourier analysis framework. In Section~\ref{sec:wellposed}, we address the well-posedness of~\eqref{eq:gle} in the critical regime. In Section~\ref{sec:MSD:critical}, we establish the asymptotic growth rate of the MSD in the critical regime under minimal assumptions on the memory kernel. In Section~\ref{sec:MSD:convergent-rate}, we construct the robust bounds for the deviation of the MSD around its asymptotic trend. Section~\ref{sec:discuss} contains conclusions and a discussion of open problems.

\section{Assumptions and main results} \label{sec:results}
For a given function $K:\rbb\to\rbb$, let $\Kcos$ and $\Ksin$ be the Fourier-type transforms of $K$ defined by
\begin{align}\label{e:Kcos_Ksin}
\Kcos(\omega) = \int_0^\infty\close K(t)\cos(t\omega)\d t,\qquad \Ksin(\omega)=\int_0^\infty\close K(t)\sin(t\omega)\d t,
\end{align}
where the integrals above are understood in the sense of improper integrals whenever they converge.

We assume the following conditions on the memory kernel.
\begin{assumption} \label{cond:K:general}
Let $K:\rbb\to \rbb \cup \{\infty\}$ be a memory kernel obtained from a solution to \eqref{eq:gle}, where $K$ may only be infinite at $t = 0$.
\begin{enumerate}[(I)]
\item \label{cond1} \begin{enumerate}[(a)]
\item \label{cond1a} $K \in L^1_{loc}(\rbb)$ is symmetric around zero and positive for all  non-zero $t$;
\item \label{cond1b} $K(t) \to 0$ as $t \to \infty$ and is eventually decreasing;
\item \label{cond1c}The improper integral $\Kcos(\omega)=\int_0^\infty \! K(t)\cos(\omega t) \, \d t$ is positive for all non-zero $\omega$.
\end{enumerate}

\item \label{cond:powerlaw:t}$K(t)\sim t^{-1}$ as $t\to\infty$.

%\item \label{cond3} there exists $c,\,\beta>0$ such that $|tK(t)-C|\leq c t^{-\beta}$, where $C:=\lim_{t\to\infty}t K(t)$.
\end{enumerate}

\end{assumption}
Conditions~\eqref{cond1a} and ~\eqref{cond1b} are quite standard when studying the asymptotic behavior of Fourier transforms. Also, they guarantee that $\Kcos(\omega)$ and $\Ksin(\omega)$ are well-defined for every non-zero $\omega$ as in Lemma~\ref{lem:fcos-fsin:well-defined}. Condition~\eqref{cond1c} may seem unusual, but we will see later in the proof of Theorem~\ref{thm:well-posed:critical} in Section~\ref{sec:wellposed} that it is required to guarantee the existence of stationary solutions for~\eqref{eq:gle}. Note that a sufficient condition for~\eqref{cond1c} to hold is that $K(t)$ be convex \cite{tuck2006positivity}.

We have not yet defined the notion of a solution of~\eqref{eq:gle}. As explained in the Introduction, in Section~\ref{sec:wellposed} we recap the well-posedness of the framework of~\cite{mckinley2018anomalous} and use it to formulate the concept of a weak solution of the GLE (Theorem \ref{thm:well-posed:critical}). Hence, for expositional purposes, we can simply assume a weak solution exists and state the first of the main results of the paper, which describes the asymptotic growth rate of the MSD in the critical regime. The proof of Theorem~\ref{thm:asymptotic:critical} is carried out in Section~\ref{sec:MSD:critical}.
\begin{theorem} \label{thm:asymptotic:critical} Suppose that $K(t)$ satisfies~\eqref{cond1} and~\eqref{cond:powerlaw:t}. Let $V$ be the weak solution of~\eqref{eq:gle} as in Theorem~\ref{thm:well-posed:critical} and let $X(t)$ be the position process associated with $V$ as in~\eqref{def:v(t)-process}. Then,
\begin{equation*}
\E{X(t)^2}\sim \frac{t}{\log (t)},\quad\text{as}\quad t\to\infty.
\end{equation*}
\end{theorem}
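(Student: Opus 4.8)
The plan is to move to the spectral domain and reduce everything to the low-frequency behavior of the velocity spectral density. Since the weak solution $V$ from Theorem~\ref{thm:well-posed:critical} is a stationary, mean-zero Gaussian process, representing $X(t)=\int_0^t V(s)\,\d s$ through the spectral representation of $V$ gives the exact identity
\begin{equation*}
\E{X(t)^2}=\int_{\rbb}\frac{2\bigl(1-\cos(\omega t)\bigr)}{\omega^2}\,f_V(\omega)\,\d\omega=4\int_0^\infty\frac{1-\cos(\omega t)}{\omega^2}\,f_V(\omega)\,\d\omega,
\end{equation*}
where $f_V$ denotes the spectral density of $V$. From~\eqref{eq:gle} together with the fluctuation--dissipation relation~\eqref{eqn:fluctuation-dissipation}, the transfer-function computation underlying the well-posedness construction produces a spectral density of the form
\begin{equation*}
f_V(\omega)=\frac{c_0\,\Kcos(\omega)}{\bigl(\beta\Kcos(\omega)\bigr)^2+\bigl(m\omega-\beta\Ksin(\omega)\bigr)^2},\qquad c_0>0.
\end{equation*}
Everything then hinges on the behavior of $f_V(\omega)$ as $\omega\to0^+$, since the kernel $2\bigl(1-\cos(\omega t)\bigr)/\omega^2$ concentrates its mass on the range $\omega\lesssim 1/t$ as $t\to\infty$, so this low-frequency regime controls the large-time MSD.

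First I would pin down the low-frequency asymptotics of the two transforms. Because $K(t)\sim t^{-1}$ by~\eqref{cond:powerlaw:t}, an Abelian argument at the critical exponent (in the spirit of~\cite{inoue1995abel,pitman1968behaviour}) shows that the cosine transform diverges \emph{logarithmically},
\begin{equation*}
\Kcos(\omega)\sim\log(1/\omega),\qquad\omega\to0^+,
\end{equation*}
whereas the sine transform $\Ksin(\omega)$ stays bounded and tends to a finite limit; the model integrals $\int_1^\infty t^{-1}\cos(\omega t)\,\d t=-\mathrm{Ci}(\omega)\sim\log(1/\omega)$ and $\int_1^\infty t^{-1}\sin(\omega t)\,\d t\to\pi/2$ capture the mechanism, while the contribution of $K$ near $t=0$ (where $K$ may be infinite but is integrable by~\eqref{cond1a}) and that of the error $K(t)-t^{-1}$ are only bounded perturbations. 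Consequently the denominator of $f_V$ is dominated by $\bigl(\beta\Kcos(\omega)\bigr)^2$ and
\begin{equation*}
f_V(\omega)\sim\frac{c_0}{\beta^2\,\Kcos(\omega)}\sim\frac{c_1}{\log(1/\omega)},\qquad\omega\to0^+,
\end{equation*}
with $c_1=c_0/\beta^2>0$. Thus $f_V$ is slowly varying at the origin and, being the boundary case $\alpha=1$ of the subdiffusive family, it vanishes there rather than converging to a positive constant; this logarithmic decay is precisely what will turn the naively expected power $t^1$ into $t/\log t$.

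Next I would extract the time asymptotics by rescaling. Substituting $u=\omega t$ yields
\begin{equation*}
\E{X(t)^2}=4t\int_0^\infty\frac{1-\cos u}{u^2}\,f_V(u/t)\,\d u,
\end{equation*}
and for each fixed $u>0$ one has $\log(t)\,f_V(u/t)\to c_1$, since $\log(t)/\log(t/u)\to1$. A dominated-convergence passage to the limit should then give
\begin{equation*}
\frac{\log t}{t}\,\E{X(t)^2}\longrightarrow 4c_1\int_0^\infty\frac{1-\cos u}{u^2}\,\d u=2\pi c_1,
\end{equation*}
using $\int_0^\infty(1-\cos u)\,u^{-2}\,\d u=\pi/2$. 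This is exactly the claimed relation $\E{X(t)^2}\sim t/\log t$.

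The main obstacle is making this last passage rigorous, i.e., the Tauberian step that upgrades the pointwise low-frequency asymptotic of $f_V$ into the uniform growth rate. Two points need uniform control in $t$. First, replacing the slowly varying factor $1/\log(t/u)$ by $1/\log t$ is legitimate only away from $u$ comparable to $t$, so I would split the integral at $u=\sqrt t$: on $(0,\sqrt t)$ one has $\log(t/u)\ge\tfrac12\log t$, which gives the uniform bound $\log(t)\,f_V(u/t)\le C$ and lets dominated convergence run against the fixed integrable majorant $C\min\{1/2,\,2/u^2\}$ for $(1-\cos u)/u^2$; on $(\sqrt t,\infty)$ one uses $f_V\le\|f_V\|_\infty$ (finite, since $f_V$ is continuous and positive on $(0,\infty)$ by~\eqref{cond1c} and vanishes at both ends through the divergence of $\Kcos$ and the growth of $m\omega$) together with $(1-\cos u)/u^2\le2/u^2$ to bound that piece by $8\|f_V\|_\infty\log(t)/\sqrt t\to0$. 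Second, one must verify that the error term in $\Kcos(\omega)\sim\log(1/\omega)$ is genuinely negligible against the diverging logarithm uniformly over the relevant frequency window, which is where the Abelian input and the boundedness of $\Ksin$ must be quantified. This is the delicate, regime-specific part of the argument, in contrast with the subdiffusive case, where $f_V$ is a genuine power law and the same rescaling produces the clean $t^\alpha$ scaling without a slowly varying correction.
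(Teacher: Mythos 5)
Your proposal is correct and follows essentially the same route as the paper: the spectral identity $\E{X(t)^2}=4\int_0^\infty\omega^{-2}(1-\cos(t\omega))\,\rhat(\omega)\,\d\omega$, the Abelian inputs $\Kcos(\omega)\sim|\log(\omega)|$ and $\Ksin(\omega)$ bounded (Proposition~\ref{prop:asymptotic:Fcos-sin:critical}), the rescaling $z=t\omega$, and the split at $\sqrt{t}$ with the uniform bound $\log(t)/\log(t/z)\leq 2$ feeding dominated convergence on $(0,\sqrt{t}]$ and $\|\rhat\|_\infty\log(t)/\sqrt{t}\to 0$ on the tail. The one refinement worth noting is that the ``delicate'' uniformity you flag at the end is already dispatched by your own setup: on $(0,\sqrt{t}]$ one has $z/t\leq t^{-1/2}\to 0$ uniformly, so the pointwise Abelian asymptotic combined with the lower bound $\Kcos(z/t)/\log(t/z)\geq c>0$ suffices, and no quantitative error rate for $\Kcos(\omega)-C_1|\log(\omega)|$ is needed at this level of the result (rates enter only in Theorem~\ref{thm:convergent-rate:critical} under the stronger condition~\eqref{cond:K:convergent-rate:critical}).
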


We now turn to the topic of bounds for the growth rate of the MSD. To establish these, we need stronger conditions, namely, we assume the memory kernel in each regime converges polynomially fast.
\begin{assumption} \label{cond:K:convergent-rate} Let $K$ be a memory kernel obtained from a solution to \eqref{eq:gle} and taking values in $[0,\infty)$ for $t > 0$. %Given $K:(0,\infty)\to[0,\infty)$, we assume that
\begin{enumerate}[(I)]
\addtocounter{enumi}{2}
\item \label{cond:K:convergent-rate:diffusion} Diffusive regime: $K\in L^1(0,\infty)$ and that there exists a positive $\beta_0>0$ such that
\begin{align}
t^{\beta_0} K(t)\in L^1(0,\infty);
\end{align}
\end{enumerate}
%(a) $K\in L^1(0,\infty)$ and that there exists a positive $\beta_0>0$ such that
%\begin{align} \label{cond:K:convergent-rate:diffusion}
%t^{\beta_0} K(t)\in L^1(0,\infty),
%\end{align}

\begin{enumerate}[(I)]
\addtocounter{enumi}{3}
\item \label{cond:K:convergent-rate:subdiffusion} Subdiffusive regime: there exist $\alpha\in (0,1)$, $C_\alpha >0$ and $\beta_{\alpha}>0$ such that $K(t)\sim t^{-\alpha}$ as $t\to\infty$ and that
\begin{align}\label{e:K:convergent-rate:subdiffusion}
|t^\alpha K(t)-C_\alpha|= O( t^{-\beta_{\alpha}}),\quad t\to\infty;
\end{align}
\end{enumerate}

\begin{enumerate}[(I)]
\addtocounter{enumi}{4}
\item \label{cond:K:convergent-rate:critical} Critical regime: $K(t)\sim t^{-1}$ as $t\to\infty$ and there exist $C_1 >0$ and $\beta_1>0$ such that
\begin{align}\label{e:K:convergent-rate:critical}
|tK(t)-C_1|= O( t^{-\beta_1}),\quad t\to\infty.
\end{align}
\end{enumerate}
\end{assumption}

\begin{remark}
Note that, under conditions~\eqref{cond:K:convergent-rate:diffusion} and ~\eqref{cond:K:convergent-rate:subdiffusion}, the well-posedness of~\eqref{eq:gle} is shown in~\cite{mckinley2018anomalous} under the same notion of weak solution put forth in Definition~\ref{def:weak-solution}.
\end{remark}

In the following theorem, we provide bounds for the MSD growth rate in the first two regimes described in Assumption \ref{cond:K:convergent-rate}, i.e., diffusive and subdiffusive. The proofs for these regimes are similar and make use of a careful analysis of the convergence rate of $\Kcos(\omega)$ and $\Ksin(\omega)$ as $\omega\to 0$ (see Section Section~\ref{sec:MSD:convergent-rate}).
\begin{theorem} \label{thm:convergent-rate:diffusion+subdiffusion}
Suppose that $K(t)$ satisfies~\eqref{cond1}. Let $V$ be the weak solution of~\eqref{eq:gle} as in Definition~\ref{def:weak-solution} and let $X(t)$ be the position process associated with $V$ as in~\eqref{def:v(t)-process}.

\noindent (a) If $K(t)$ satisfies condition~\eqref{cond:K:convergent-rate:diffusion}, then
\begin{align}\label{e:MSD_III}
\Big|\frac{\E{X(t)^2}}{t}- \frac{2}{\beta\Kcos(0)}\Big| = O(t^{-\gamma_0/2}), \quad t \rightarrow \infty,
\end{align}
where $\gamma_0=\min\{\beta_0,2\}$ and $\beta_0$ is the constant from~\eqref{cond:K:convergent-rate:diffusion}.

\noindent (b) If $K(t)$ satisfies condition~\eqref{cond:K:convergent-rate:subdiffusion}, then
\begin{align}\label{e:case_IV_MSD/t^(alpha)-const}
\Big|\frac{\E{X(t)^2}}{t^{\alpha}}- \frac{2\sin(\alpha\pi)}{\alpha\pi\beta C_\alpha
} \Big| = O(t^{-\eta/2}), \quad t \rightarrow \infty,
\end{align}
%\frac{-4\int_0^\infty\frac{\cos(z)}{z^\alpha}\d z \,\Gamma(-\alpha) \cos\Big(\frac{\alpha \pi}{2}\Big)}{\pi\beta C_\alpha\big[ \big(\int_0^\infty\frac{\cos(z)}{z^\alpha}\d z\big)^2+\big(\int_0^\infty\frac{\sin(z)}{z^\alpha}\d z\big)^2\big]}
where $C_\alpha=\lim_{t\to\infty}t^\alpha K(t)$, $\eta = \min\{\alpha,1-\alpha,\alpha\beta_\alpha\}$ and $\alpha,\,\beta_\alpha$ are constants from~\eqref{cond:K:convergent-rate:subdiffusion}.
\end{theorem}

The following theorem is the analog of Theorem \ref{thm:convergent-rate:diffusion+subdiffusion} in the critical regime. Similarly to Theorem~\ref{thm:convergent-rate:diffusion+subdiffusion}, the proof of Theorem~\ref{thm:convergent-rate:critical} draws upon an analysis of the small-frequency asymptotics of $\Kcos(\omega)$, i.e., as $\omega\to 0$.
\begin{theorem} \label{thm:convergent-rate:critical} Suppose that $K(t)$ satisfies~\eqref{cond1} and~~\eqref{cond:K:convergent-rate:critical}. Let $V$ be the weak solution of~\eqref{eq:gle} as in Definition~\ref{def:weak-solution} and let $X(t)$ be the position process associated with $V$ as in~\eqref{def:v(t)-process}. Then,
\begin{equation}\label{e:convergent-rate:critical}
\Big|\frac{\E{X(t)^2}}{ t/\log(t)}- \frac{2}{\beta C_1}\Big| = O(|\log(t)|^{-1}), \quad t \rightarrow \infty,
\end{equation}
where $C_1=\lim_{t\to\infty}tK(t)$ (see \eqref{e:K:convergent-rate:critical}).
\end{theorem}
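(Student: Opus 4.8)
The plan is to reduce the MSD to a single spectral integral and then extract both its leading order and the $O(1/\log t)$ correction from the small-frequency behaviour of $\Kcos$. In the notation of the Fourier framework of Section~\ref{sec:Fourier}, the stationary velocity process $V$ has spectral density of the form
\[
f_V(\omega) = \frac{\beta\,\Kcos(\omega)}{\pi\big[\beta^2\Kcos(\omega)^2 + (m\omega - \beta\Ksin(\omega))^2\big]},
\]
and, since $X(t)=\int_0^t V(s)\,\d s$ as in~\eqref{def:v(t)-process}, the MSD admits the representation
\[
\E{X(t)^2} = \int_\rbb \frac{2\big(1-\cos(\omega t)\big)}{\omega^2}\,f_V(\omega)\,\d\omega = 4\int_0^\infty \frac{1-\cos(\omega t)}{\omega^2}\,f_V(\omega)\,\d\omega,
\]
the last equality using that $f_V$ is even (as $\Kcos$ is even and $\Ksin$ is odd). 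Since the kernel $\omega^{-2}\big(1-\cos(\omega t)\big)$ concentrates at scale $\omega\sim 1/t$, the asymptotics of $\E{X(t)^2}$ are governed by $f_V$ near the origin; the fact that $f_V(0)=0$ (rather than a positive constant, as in the diffusive regime) is precisely what produces sub-linear growth.

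The core step is a refined small-frequency expansion of $\Kcos$ and $\Ksin$. Writing $K(t)=C_1/t + \big[K(t)-C_1/t\big]$ on $[1,\infty)$ and invoking condition~\eqref{cond:K:convergent-rate:critical}, which guarantees $K(t)-C_1/t = O(t^{-1-\beta_1})$ is integrable at infinity, I would split
\[
\Kcos(\omega) = \int_0^1 \! K(t)\cos(\omega t)\,\d t + C_1\int_1^\infty \frac{\cos(\omega t)}{t}\,\d t + \int_1^\infty \big[K(t)-C_1/t\big]\cos(\omega t)\,\d t.
\]
The middle term equals $C_1\int_\omega^\infty z^{-1}\cos z\,\d z = -C_1\,\mathrm{Ci}(\omega) = C_1\log(1/\omega) + O(1)$ via the cosine-integral expansion $\mathrm{Ci}(\omega)=\gamma+\log\omega+O(\omega^2)$, while the first and third terms converge to finite constants with polynomial error (splitting the $t$-integral at $t=1/\omega$ and using $|\cos(\omega t)-1|\le\min(2,\tfrac12\omega^2t^2)$). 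This yields $\Kcos(\omega) = C_1\log(1/\omega)+O(1)$ as $\omega\to 0^+$; the analogous argument with $\int_1^\infty t^{-1}\sin(\omega t)\,\d t = \pi/2 - \mathrm{Si}(\omega)$ gives $\Ksin(\omega)=O(1)$. Substituting into $f_V$ and factoring out the dominant $\big(\beta C_1\log(1/\omega)\big)^2$ in the denominator then produces the uniform expansion
\[
f_V(\omega) = \frac{1}{\pi\beta C_1\log(1/\omega)}\Big(1 + O\big(\tfrac{1}{\log(1/\omega)}\big)\Big), \qquad \omega\to 0^+.
\]

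Finally, I would change variables $\omega = z/t$ to write $\E{X(t)^2}/(t/\log t) = 4\log t\int_0^\infty \phi(z)\,f_V(z/t)\,\d z$ with $\phi(z)=(1-\cos z)/z^2$, $\int_0^\infty\phi = \pi/2$, and split the integral at $z=\omega_0 t$ for a fixed $\omega_0\in(0,1)$. On $(\omega_0 t,\infty)$ the frequency $z/t$ stays bounded away from $0$, where $f_V$ is bounded (its denominator is strictly positive by condition~\eqref{cond1c} and $f_V\to0$ at infinity), and $\phi(z)\le 2/z^2$ forces this contribution to be $O(1/t)$, hence negligible after multiplication by $\log t$. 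On $(0,\omega_0 t)$ I would insert the expansion above, so that $\log t\cdot f_V(z/t)=\tfrac{1}{\pi\beta C_1}\cdot\tfrac{\log t}{\log(t/z)}\big(1+O(1/\log(t/z))\big)$, and compare with the target constant through $\tfrac{\log t}{\log(t/z)}-1 = \tfrac{\log z}{\log t-\log z}$. A further split at $z=\sqrt t$ (where $\log(t/z)\ge\tfrac12\log t$) controls the near region by $\tfrac{C}{\log t}\int_0^\infty\phi(z)|\log z|\,\d z = O(1/\log t)$, using that $\phi(z)|\log z|$ is integrable, while the far region $\sqrt t < z \le \omega_0 t$ is handled by $\phi(z)\le 2/z^2\le 2/t$ and is of order $O(\log t/\sqrt t)$. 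Collecting these estimates yields $\E{X(t)^2}/(t/\log t) = \tfrac{2}{\beta C_1} + O(1/\log t)$, as claimed.

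I expect the main obstacle to be the refined $\Kcos$ expansion: unlike in the diffusive and subdiffusive regimes, the critical kernel sits exactly at the boundary of integrability, so $\Kcos$ diverges only logarithmically and the quantity one must control is the \emph{constant} (zeroth) order term of the expansion, which stays bounded only because of the polynomial convergence rate in~\eqref{cond:K:convergent-rate:critical}; without that rate the subleading term could itself diverge and the $O(1/\log t)$ bound would fail. A secondary difficulty is that $\tfrac{\log t}{\log(t/z)}$ is not uniformly close to $1$ across the full range of $z$, so the decay of $\phi$ must be exploited through the intermediate split at $z=\sqrt t$ to keep every error term at order $1/\log t$.
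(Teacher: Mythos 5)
Your proposal is correct, and its top-level architecture coincides with the paper's: the same spectral representation \eqref{eqn:X(t)} of the MSD through $\rhat$ in \eqref{eqn:spectral-density:Kcos-Ksin}, the same rescaling $z=t\omega$, and the same pivotal small-frequency expansion $\Kcos(\omega)=C_1|\log(\omega)|+O(1)$, which is exactly the content of Lemma~\ref{lem:Kcos:convergent-rate:critical}. Where you genuinely diverge is in how that expansion is derived and how the final integral is split. The paper proves Lemma~\ref{lem:Kcos:convergent-rate:critical} by comparing $\Kcos(\omega)$ with the truncated integral $\int_0^{1/\omega}K(t)\,\d t$, sharpening the $\epsilon$-split of Lemma~\ref{lem:asymptotic:Fcos-sin:critical} and invoking the monotone-tail bound \eqref{ineq:fcos-fsin}; you instead subtract the model kernel $C_1/t$ on $[1,\infty)$, note that \eqref{e:K:convergent-rate:critical} makes $K(t)-C_1/t=O(t^{-1-\beta_1})$ absolutely integrable there, and read off the logarithm from the cosine-integral asymptotics of $\int_\omega^\infty z^{-1}\cos(z)\,\d z$ --- a more direct route under the quantitative hypothesis, which for this step does not even use the eventual monotonicity of $K$ (it is still needed elsewhere, e.g.\ for the boundedness of $\rhat$ away from the origin, where you correctly fall back on \eqref{cond1c} and Lemma~\ref{lem:fcos-fsin:well-defined}). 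In the final assembly the paper splits at $\log(t)^{-2}$ and $\log(t)^{2}$ while you split at $\sqrt{t}$ and $\omega_0 t$; both work, but your explicit exploitation of the integrability of $(1-\cos z)z^{-2}|\log z|$ via the identity $\log(t)/\log(t/z)-1=\log(z)/(\log(t)-\log(z))$ is arguably the cleaner way to land the $O(1/\log t)$ rate: the paper's corresponding displayed pointwise bound $c\log(\log t)/(\log t-2\log(\log t))$ is, taken uniformly over its window, only $O(\log\log t/\log t)$, and it is precisely the $z$-dependent, integrated bound of the kind you use that recovers the stated rate. Your closing diagnosis is also on target: the whole argument hinges on the rate $\beta_1$ in \eqref{e:K:convergent-rate:critical} pinning the zeroth-order term of $\Kcos$ to $O(1)$, without which the subleading term could itself diverge and only the weaker conclusion of Theorem~\ref{thm:asymptotic:critical} would survive.
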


\begin{remark}\label{r:extension_full_GLE}
Theorems \ref{thm:asymptotic:critical}, \ref{thm:convergent-rate:diffusion+subdiffusion} and \ref{thm:convergent-rate:critical} are only shown for the reduced family~\eqref{eq:gle}. However, extensions to the full equation~\eqref{eq:gle:full} can be established by similar arguments.
\end{remark}

%
%\begin{lemma} \label{lem:lim:log} Suppose that $g:(0,\infty)\to\rbb$ is a continuous function and that
%\begin{align}\label{lim:g(2t)}
%g(2 t)-g(t) \to 0,\qquad t\to\infty.
%\end{align}
%Then, \begin{align*}
%\frac{g(t)}{\log(t)}\to 0,\qquad t\to\infty.
%\end{align*}
%\end{lemma}
%\begin{proof} Fix $\epsilon>0$. In view of~\eqref{lim:g(2t)}, there exists $N>0$ sufficiently large such that for every $t\geq N$, it holds that
%\begin{align*}
%|f(2t)-f(t)|<\epsilon.
%\end{align*}
%For a given integer $k>0$ (to be chosen later), consider arbitrarily $t\in[2^{k}N,\infty)$. Then, there exists $k_1\geq k$ and $s\in[N,2N)$ such that $t=2^{k_1}s$. It follows from~\eqref{lim:g(2t)} that
%\begin{align*}
%|g(t)-g(s)|&= |g(2^{k_1}s)-g(s)|\\
%&\leq |g(2^{k_1}s)-g(2^{k_1-1}s)|+\dots+|g(2s)-g(s)|\\
%&\leq k_1\epsilon.
%\end{align*}
%As a consequence, we have
%\begin{align*}
%|g(t)|\leq |g(s)|+k_1\epsilon\leq \sup_{r\in[N,2N)}|g(r)|+k_1\epsilon.
%\end{align*}
%We divide through both sides by $\log(t)=\log(2^{k-1}s)$ to find that
%\begin{align*}
%\frac{|g(t)|}{\log(t)}&\leq \frac{\sup_{r\in[N,2N)}|g(r)|+k_1\epsilon}{\log(t)}\\
%&= \frac{\sup_{r\in[N,2N)}|g(r)|}{k_1\log(2)+\log(s)}+\frac{k_1\epsilon}{k_1\log(2)+\log(s)}\\
%&\leq \frac{\sup_{r\in[N,2N)}|g(r)|}{k\log(2)}+\frac{\epsilon}{\log(2)}.
%\end{align*}
%By taking $k$ sufficiently large, we obtain the result, thus completing the proof.
%\end{proof}

\section{Abelian-Tauberian Fourier analysis of memory} \label{sec:Fourier}
Throughout the rest of the paper, $c$ denotes a generic positive constant. The main parameters that it depends on will be indicated in parenthesis, e.g., $c(T,q)$ is a function of $T$ and $q$.

In this section, we introduce and establish the Fourier analysis results that are used in the subsequent sections. Recall that the usual Fourier transform of a function $\f\in L^1(\rbb)$ is given by
\begin{align*}
\widehat{\f}(\omega)=\int_\rbb e^{it\omega}\f(t)\d t, \quad \omega \in \bbR.
\end{align*}

First, we state the following lemma, which shows that $\Kcos$ and $\Ksin$ are well-defined under mild assumptions. For the sake of brevity, we omit its proof, which is similar to that of \cite[Lemma 2.18]{mckinley2018anomalous}. The estimate~\eqref{ineq:fcos-fsin} provided in the lemma is useful in establishing Fourier-type results on $\Kcos$ and $\Ksin$ (Propositions \ref{prop:asymptotic:Fcos-sin:critical}, \ref{prop:temper-distribution:Fcos} and Lemma \ref{lem:asymptotic:Fcos-sin:critical}).
\begin{lemma}\label{lem:fcos-fsin:well-defined} Suppose that $K$ satisfies~\eqref{cond1a} and \eqref{cond1b}. Then $\Kcos$ and $\Ksin$ are well-defined, continuous on $\omega\in(0,\infty)$ and converge to zero as $\omega\to\infty$. Furthermore, there exists a constant $A$ sufficiently large such that for every nonzero $\omega$ and $t\geq A$, %it holds that
\begin{align} \label{ineq:fcos-fsin}
\max\Big\{\Big|\int_t^\infty\close K(s)\cos(s\omega)\d s\Big|,\Big|\int_t^\infty\close K(s)\sin(s\omega)\d s\Big|\Big\}\leq \frac{4K(t)}{|\omega|}.
\end{align}
\end{lemma}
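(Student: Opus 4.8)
The plan is to prove Lemma~\ref{lem:fcos-fsin:well-defined} by separating the easy qualitative claims (well-definedness, continuity, decay at infinity) from the quantitative tail estimate~\eqref{ineq:fcos-fsin}, and to establish the latter via a second-mean-value-theorem (Dirichlet test) argument that exploits the eventual monotonicity of $K$. Since the proof is declared similar to that of~\cite[Lemma 2.18]{mckinley2018anomalous}, I would organize it so the key inequality carries the real content.

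\textbf{Setting up well-definedness.} First I would fix a nonzero $\omega$ and note that, by~\eqref{cond1b}, there is a threshold $A$ beyond which $K$ is decreasing and tends to $0$. On $[0,A]$ the integrals $\int_0^A K(t)\cos(t\omega)\,\d t$ and the sine analogue are finite because $K\in L^1_{loc}(\rbb)$ by~\eqref{cond1a}, so the only issue is convergence of the improper tail $\int_A^\infty$. For the tail I would invoke the second mean value theorem for integrals: for $A\le a<b$, since $K$ is nonnegative and nonincreasing there, there exists $\xi\in[a,b]$ with
\begin{align*}
\int_a^b K(s)\cos(s\omega)\,\d s = K(a)\int_a^{\xi}\cos(s\omega)\,\d s,
\end{align*}
and the latter trigonometric integral is bounded in absolute value by $2/|\omega|$. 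Hence $|\int_a^b K(s)\cos(s\omega)\,\d s|\le 2K(a)/|\omega|$, which $\to 0$ as $a\to\infty$ because $K(a)\to 0$; the Cauchy criterion then gives convergence of the improper integral. The same computation applies verbatim to $\Ksin$. Continuity on $(0,\infty)$ follows from uniform convergence of the tails on compact subsets of $(0,\infty)$ (the bound $2K(A)/|\omega|$ is uniform for $\omega$ bounded away from $0$), allowing interchange of limit and integral, while decay as $\omega\to\infty$ follows from the Riemann–Lebesgue lemma on $[0,A]$ together with the uniform tail bound $2K(A)/|\omega|$.

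\textbf{The quantitative estimate.} For~\eqref{ineq:fcos-fsin} I would apply the same second-mean-value argument directly to the full tail $\int_t^\infty$ for $t\ge A$. Writing the improper integral as the limit of $\int_t^b$ and applying the theorem on each $[t,b]$ yields $|\int_t^b K(s)\cos(s\omega)\,\d s|\le K(t)\cdot|\int_t^{\xi_b}\cos(s\omega)\,\d s|\le 2K(t)/|\omega|$ uniformly in $b$; passing to the limit preserves the bound. This already gives the constant $2$; the factor $4$ stated in the lemma is a safe over-estimate, perhaps absorbing a boundary contribution if one prefers to integrate by parts rather than use the mean value theorem, so I would not worry about optimizing the constant. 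The sine case is identical.

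\textbf{Main obstacle.} The only genuinely delicate point is the rigorous justification of the second mean value theorem in the \emph{improper} setting — i.e., that one may take $b\to\infty$ after applying the theorem on $[t,b]$, rather than applying it to a divergent integral. The clean way around this is to always work on finite intervals $[t,b]$, obtain the uniform-in-$b$ bound $2K(t)/|\omega|$, and only then pass to the limit, which is legitimate once the improper integral is known to converge (established in the first step). A secondary technical care is needed near $t=0$, where $K$ may be infinite by hypothesis, but this never enters~\eqref{ineq:fcos-fsin} since we only require $t\ge A>0$, where $K$ is finite, monotone, and integrable. I therefore expect the proof to be routine modulo careful bookkeeping, which is presumably why the authors defer to~\cite[Lemma 2.18]{mckinley2018anomalous} and omit it.
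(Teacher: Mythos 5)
Your proof is correct and follows essentially the route the paper intends: the paper omits its own proof, deferring to \cite[Lemma 2.18]{mckinley2018anomalous}, which rests on the same monotone-tail estimate (second mean value theorem/Dirichlet test on $[t,b]$ with $t\geq A$, where $A$ is the eventual-monotonicity threshold from~\eqref{cond1b}) combined with local integrability of $K$ on $[0,A]$ from~\eqref{cond1a} and the Riemann--Lebesgue lemma for the decay as $\omega\to\infty$. Your care in working on finite intervals $[t,b]$ and only then letting $b\to\infty$ is the right way to handle the improper integral, and it in fact yields~\eqref{ineq:fcos-fsin} with the sharper constant $2$ in place of $4$, so the stated bound holds a fortiori.
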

In Proposition~\ref{prop:asymptotic:Fcos-sin:critical}, stated and proved next, we provide an Abelian result for Fourier-type transforms when $K(t)\sim t^{-1}$ as $t\to\infty$. This proposition is, in turn, used in the proof of Theorem~\ref{thm:asymptotic:critical}, where we establish the large-time asymptotic growth of the MSD in the critical regime.

\begin{proposition}[Abelian direction]\label{prop:asymptotic:Fcos-sin:critical} Suppose that $K\in L^1_{\text{loc}}(0,\infty)$ satisfies conditions~\eqref{cond1b}and~ \eqref{cond:powerlaw:t}. Then,
\begin{equation}\label{e:lim_Ksin(omega)=const}
\lim_{\omega\to 0}\Ksin(\omega) = C_1 \hspace{1mm}\frac{\pi}{2},
\end{equation}
where $C_1=\lim_{t\to\infty}t\,K(t)$ (see \eqref{e:K:convergent-rate:critical}). Moreover,
\begin{equation}\label{e:Kcos-sim-logomega}
\Kcos(\omega)\sim |\log(\omega)|, \quad  \omega \to 0.
\end{equation}

\end{proposition}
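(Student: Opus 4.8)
The plan is to establish both asymptotics by splitting the defining integrals for $\Ksin$ and $\Kcos$ at a suitable point $t = 1/\omega$ (or $t = A$ for the tail) and exploiting the hypothesis $K(t) \sim t^{-1}$. Since $K(t) \to 0$ and is eventually decreasing, Lemma~\ref{lem:fcos-fsin:well-defined} gives the crucial tail control $|\int_t^\infty K(s)\cos(s\omega)\,\d s| \leq 4K(t)/|\omega|$, which I expect to use repeatedly to discard tail contributions after rescaling.

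\medskip
\noindent\textbf{The sine transform.} First I would prove~\eqref{e:lim_Ksin(omega)=const}. Write $K(t) = \frac{C_1}{t} + R(t)$ where, by $K(t)\sim t^{-1}$, the remainder satisfies $R(t) = o(1/t)$ as $t\to\infty$. Heuristically,
\begin{equation*}
\Ksin(\omega) = \int_0^\infty K(t)\sin(t\omega)\,\d t \approx C_1\int_0^\infty \frac{\sin(t\omega)}{t}\,\d t = C_1\int_0^\infty \frac{\sin(u)}{u}\,\d u = C_1\,\frac{\pi}{2},
\end{equation*}
using the change of variables $u = t\omega$ and the Dirichlet integral. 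The work is to justify that the near-origin behavior of $K$ (where $K$ need only be in $L^1_{\text{loc}}$ and may blow up at $0$) and the tail remainder $R$ contribute negligibly in the limit $\omega\to 0$. For the contribution of $R$, I would split at a large but fixed $A$: the integral over $[0,A]$ against $\sin(t\omega)$ is $O(\omega)\to 0$ since $K\in L^1_{\text{loc}}$ and $|\sin(t\omega)|\le t|\omega|$, while on $[A,\infty)$ I substitute $u=t\omega$ and use $R(u/\omega) = o(\omega/u)$ together with dominated convergence, controlling oscillatory tails via the estimate~\eqref{ineq:fcos-fsin} applied to the decreasing majorant of $R$. The main subtlety is a careful $\epsilon$--$A$ argument: choose $A$ so that $|tK(t) - C_1| < \epsilon$ for $t \geq A$, then show the truncated integral $\int_A^\infty K(t)\sin(t\omega)\,\d t$ converges to $C_1\pi/2$ up to $O(\epsilon)$ as $\omega \to 0$.

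\medskip
\noindent\textbf{The cosine transform.} For~\eqref{e:Kcos-sim-logomega}, the divergence of $\int \frac{\cos(t\omega)}{t}\,\d t$ near $t=0$ is precisely what produces the logarithm. The idea is that, with $K(t)\approx C_1/t$ for large $t$, the cosine transform behaves like $C_1\int \frac{\cos(t\omega)}{t}\,\d t$, whose divergent part near the origin (cut off naturally at scale $t \sim 1/\omega$ by the oscillation of the cosine) contributes $\sim C_1|\log(\omega)|$. Concretely, I would split $\Kcos(\omega)$ at $t = 1/\omega$. On the range $[A, 1/\omega]$ the cosine is essentially non-oscillatory after rescaling and $\int_A^{1/\omega} \frac{C_1}{t}\cos(t\omega)\,\d t \sim C_1 \log(1/\omega) = C_1|\log(\omega)|$; on $[1/\omega,\infty)$ the tail bound~\eqref{ineq:fcos-fsin} gives $O(K(1/\omega)/\omega) = O(1) = o(|\log(\omega)|)$; and the fixed piece $[0,A]$ is $O(1)$, again negligible against the logarithm. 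Writing $K = C_1/t + R$ and showing that $R$ contributes only $o(|\log\omega|)$ (by the same split, using $R(t)=o(1/t)$ so that $\int_A^{1/\omega} |R(t)|\,\d t = o(\log(1/\omega))$) completes the asymptotic.

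\medskip
\noindent\textbf{Main obstacle.} The principal difficulty is uniform control of the oscillatory tail integrals after the change of variables $u = t\omega$: because $K$ is only assumed eventually decreasing with $K(t)\sim t^{-1}$ (rather than a clean power law), I cannot appeal to an explicit closed form as in the $K(t)=t^{-\alpha}$ case of~\cite{kou2008stochastic}. Instead I must combine the decay estimate~\eqref{ineq:fcos-fsin} with a monotone majorant argument and a dominated-convergence/second-mean-value-theorem step, handling the remainder $R(t) = K(t) - C_1/t = o(1/t)$ carefully so that its rescaled contribution vanishes in the limit without any quantitative rate (consistent with the purely qualitative statements~\eqref{e:lim_Ksin(omega)=const} and~\eqref{e:Kcos-sim-logomega}).
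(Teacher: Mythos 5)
Your proposal is correct, and while the sine half follows essentially the paper's own route, the cosine half takes a genuinely different path. For \eqref{e:lim_Ksin(omega)=const}, your fixed-$A$ split plus rescaling $u = t\omega$ and dominated convergence is the same mechanism as the paper's decomposition $\int_0^1 + \int_1^{A/\omega} + \int_{A/\omega}^\infty$ with the tail killed by \eqref{ineq:fcos-fsin}; the difference in where the split point sits is cosmetic, and your $\epsilon$--$A$ bookkeeping (fix the truncation level before sending $\omega \to 0$, then enlarge it) is exactly what the paper does implicitly by letting $A \to \infty$ after the limit in $\omega$. For \eqref{e:Kcos-sim-logomega}, however, the paper does \emph{not} argue directly: it writes $\Kcos(\omega)/|\log(\omega)|$ as in \eqref{eqn:prop:asymptotic:Fcos-sin:critical:1}, invokes \cite[Theorem 7]{pitman1968behaviour} to get that $\Kcos(\omega) - \int_0^{1/\omega} K(t)\,\d t$ converges to a finite constant, and then computes $\int_0^{x} K(t)\,\d t / \log(x) \to \lim_{x\to\infty} x K(x) = C_1$ by a L'H\^opital-type step. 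Your approach instead reproves the needed Abelian fact from scratch via the decomposition $K = C_1/t + R$ with $R = o(1/t)$ and a three-piece split at $[0,A]$, $[A,1/\omega]$, $[1/\omega,\infty)$; this is more self-contained (no external Tauberian citation) and in fact anticipates the quantitative splitting the paper itself carries out later in Lemma~\ref{lem:asymptotic:Fcos-sin:critical} and Lemma~\ref{lem:Kcos:convergent-rate:critical} for the Tauberian direction and the rate bounds, at the cost of being longer than the paper's citation-based argument. One small caveat: you cannot apply \eqref{ineq:fcos-fsin} to ``the decreasing majorant of $R$,'' since that estimate exploits oscillatory cancellation against a monotone amplitude and $R$ itself may change sign; the standard fix is to bound the oscillatory tails of $K$ and of $C_1/t$ separately (both are eventually decreasing, so \eqref{ineq:fcos-fsin} applies to each) and subtract, after which your argument goes through unchanged.
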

\begin{proof}To show \eqref{e:lim_Ksin(omega)=const}, we first note that condition~\eqref{cond:powerlaw:t} implies that $t\,K(t)$ is bounded for $t\in[1,\infty)$. For $\omega>0$ small and $A$ large, we can re-express
\begin{align}\label{e:Ksin=I0+I1+I2}
\Ksin(\omega) = \int_0^\infty\close  K(t)\sin(\omega t)\d t &= \Big\{\int_0^1+\int_1^{A/\omega}\close +\int_{A/\omega}^\infty\Big\}  K(t)\sin(\omega t)\d t \nonumber\\% \close K(t)\sin(\omega t)\d t\\
&= I_0(\omega)+I_1(\omega)+I_2(\omega).
\end{align}
Since $K$ is locally integrable, the dominated convergence theorem readily implies that
\begin{align}\label{e:I0_for_Ksin}
I_0(\omega)\to 0,\qquad\omega\to 0.
\end{align}
In regard to $I_2(\omega)$, for $\omega > 0$ sufficiently small, $K(t)$ is decreasing for $t\in[A/\omega,\infty)$. Then, we can invoke~\eqref{ineq:fcos-fsin} to obtain
%\begin{align*}
%\int_{1/\omega}^A \close f(t)\sin(\omega t)\d t = f\Big(\frac{1}{\omega}\Big)\int_{1/\omega}^{A^*}\close  \sin(\omega t)\d t \leq f\Big(\frac{1}{\omega}\Big) \frac{2}{\omega}.
%\end{align*}
%It follows that
\begin{align}\label{e:I2_for_Ksin}
|I_2(\omega)|= \Big|\int_{A/\omega}^\infty\close K(t)\sin(\omega t)\d t \Big| \leq K\Big(\frac{A}{\omega}\Big)\frac{4}{\omega}\leq \frac{4}{A} \sup_{z \in [1,\infty)}z\,K(z).
\end{align}
Concerning $I_1(\omega)$, using a change of variable $z=t\omega$, we rewrite $I_1$ as
\begin{align*}
I_1 = \int_\omega^{A}\close K\Big(\frac{z}{\omega}\Big)\frac{\sin(z)}{\omega}\d z=\int_\omega^A\frac{z}{\omega}K\Big(\frac{z}{\omega}\Big)\frac{\sin(z)}{z}\d z.
\end{align*}
It follows from the dominated convergence theorem that
\begin{align}\label{e:I1_for_Ksin_bound}
I_1\to C_1\int_0^A\frac{\sin(z)}{z}\d z,\quad \omega\to 0.
\end{align}
Combining \eqref{e:Ksin=I0+I1+I2}--\eqref{e:I1_for_Ksin_bound} and \cite[p.\ 423, formula (3.721.1)]{gradshteyn:ryzhik:2007}, we obtain %the desired limit of $\Ksin$.
$$
\lim_{\omega\to 0}\Ksin(\omega) = C_1 \int_0^\infty \frac{\sin(z)}{z}\d z = C_1 \frac{\pi}{2}.
$$
This shows \eqref{e:lim_Ksin(omega)=const}.

Turning to \eqref{e:Kcos-sim-logomega}, note that
\begin{equation}\label{eqn:prop:asymptotic:Fcos-sin:critical:1}
\frac{\Kcos(\omega)}{|\log(\omega)|} =\frac{\Kcos(\omega)-\int_0^{1/\omega}K(t)\d t}{|\log(\omega)|}+\frac{\int_0^{1/\omega}K(t)\d t}{|\log(\omega)|}.
\end{equation}
However, by \cite[Theorem 7]{pitman1968behaviour},
\begin{align}
\Kcos(\omega)-\int_0^{1/\omega}\close\close K(t)\d t\to c<\infty, \quad \omega\to 0.
\end{align}
Therefore, the first fraction on the right-hand side of~\eqref{eqn:prop:asymptotic:Fcos-sin:critical:1} converges to zero as $\omega\to 0$. In regard to the second fraction, we can write
\begin{align}\label{e:lim_int_K/log-omega}
\lim_{\omega\to 0}\frac{\int_0^{1/\omega}K(t)\d t}{|\log(\omega)|}=\lim_{x\to\infty}\frac{\int_0^{x}K(t)\d t}{\log(x)}=\lim_{x\to\infty}xK(x)=C_1.
\end{align}
Expressions \eqref{eqn:prop:asymptotic:Fcos-sin:critical:1}--\eqref{e:lim_int_K/log-omega} imply \eqref{e:Kcos-sim-logomega}, as claimed.
\end{proof}

Under a mild additional assumption on the kernel function $K(t)$, a converse for expression \eqref{e:Kcos-sim-logomega} in Proposition~\ref{prop:asymptotic:Fcos-sin:critical} can be established that is of interest in its own right. To be precise, we have the following Tauberian-type proposition. %Proposition~\ref{prop:asymptotic:Fcos-sin:critical:Tauberian}, whose proof is relatively short. Although we will not employ this proposition throughout the paper, we include it here for the sake of completeness.
\begin{proposition}[Tauberian direction] \label{prop:asymptotic:Fcos-sin:critical:Tauberian} Suppose $K \in L^1_{\text{loc}}(0,\infty)$ satisfies~\eqref{cond1b}, and that
\begin{equation}\label{e:suptK(t)<infty}
\sup_{t \in [1,\infty)}|t\,K(t)|<\infty.
\end{equation}
If $\Kcos(\omega)\sim |\log(\omega)|$ as $\omega\to 0^+$, then
\begin{align}\label{e:K(t)_decays_t^(-1)}
K(t)\sim t^{-1}, \quad t\to\infty.
\end{align}
\end{proposition}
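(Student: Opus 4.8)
The plan is to mirror the structure of the Abelian direction in Proposition~\ref{prop:asymptotic:Fcos-sin:critical} and then invert it by means of a Tauberian theorem for monotone densities. The first step is to separate the ``arithmetic'' content of $\Kcos$ from its cosine oscillation. Exactly as in \eqref{eqn:prop:asymptotic:Fcos-sin:critical:1}, I would write
\[
\Kcos(\omega) = \Big(\Kcos(\omega) - \int_0^{1/\omega}K(t)\,\d t\Big) + \int_0^{1/\omega}K(t)\,\d t,
\]
and argue that the first bracket stays bounded as $\omega\to 0^+$. This is where the hypotheses \eqref{cond1b} and \eqref{e:suptK(t)<infty} enter: together they are precisely the monotonicity-plus-boundedness conditions under which \cite[Theorem 7]{pitman1968behaviour} gives $\Kcos(\omega)-\int_0^{1/\omega}K(t)\,\d t\to c<\infty$. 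Alternatively one can reprove this by hand, splitting $\int_0^\infty=\int_0^{1/\omega}+\int_{1/\omega}^\infty$, bounding the tail by $4K(1/\omega)/\omega=O(1)$ via \eqref{ineq:fcos-fsin}, and controlling the low-frequency correction $\int_0^{1/\omega}K(t)(1-\cos\omega t)\,\d t\le \tfrac{\omega^2}{2}\int_0^{1/\omega}t^2K(t)\,\d t=O(1)$, both uniformly in $\omega$ thanks to $\sup_{t\ge1}tK(t)<\infty$.

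Feeding the hypothesis $\Kcos(\omega)\sim|\log(\omega)|$ into this decomposition immediately yields the integrated asymptotics
\[
\int_0^{1/\omega}K(t)\,\d t\sim|\log(\omega)|,\qquad\text{equivalently}\qquad \int_0^x K(t)\,\d t\sim\log(x),\quad x\to\infty.
\]
The decisive second step is to pass from this statement about the \emph{integral} of $K$ back to the \emph{pointwise} statement $K(t)\sim t^{-1}$. I emphasize that this is a genuine Tauberian inversion and not a formality: the Abelian direction exploited the L'H\^opital/Karamata identity $\int_0^xK/\log(x)\to\lim_x xK(x)$ used in \eqref{e:lim_int_K/log-omega}, but that identity presupposes that the pointwise limit exists, and it does not run in reverse without a Tauberian side condition. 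The plan is to supply the monotonicity of $K$ granted by \eqref{cond1b} as that side condition and invoke the monotone density theorem (Karamata's Tauberian theorem in the form suited to cosine transforms of monotone kernels, cf.\ \cite{inoue1995abel,pitman1968behaviour}), thereby transferring the regular variation of $x\mapsto\int_0^xK$ to $K$ itself and identifying $\lim_t tK(t)=C_1$ with the constant appearing in $\int_0^xK\sim C_1\log(x)$.

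I expect the Tauberian step to be the main obstacle, and for a structural rather than merely technical reason. The slowly varying, index-zero case of the monotone density theorem is degenerate: from $\int_0^xK\sim\log(x)$ together with monotonicity alone one can only conclude $xK(x)=o(\log x)$, which is vacuous here since $xK(x)$ is already bounded by \eqref{e:suptK(t)<infty}. Consequently, a naive sandwich comparing $K(x)$ to window averages $\tfrac{1}{(\lambda-1)x}\int_x^{\lambda x}K$ and letting $\lambda\downarrow1$ after $x\to\infty$ is doomed, because the $o(\log x)$ error in the integrated asymptotics swamps the $\log\lambda$ signal over any short multiplicative window $[x,\lambda x]$. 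The hard part will therefore be to use the boundedness hypothesis \eqref{e:suptK(t)<infty} not just to apply \cite{pitman1968behaviour} but to exclude slow oscillation of $tK(t)$ between distinct accumulation points, extracting more from the monotone kernel than the leading integrated asymptotics alone provide; I would lean on the cosine-transform Tauberian machinery of \cite{inoue1995abel} calibrated to the critical $1/t$ scale to package exactly this inversion, and regard verifying that its hypotheses are met under \eqref{cond1b} and \eqref{e:suptK(t)<infty} as the crux of the argument.
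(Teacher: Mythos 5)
Your first step is sound and in fact lands a sharper estimate than the paper's. The paper proves exactly your reduction as Lemma~\ref{lem:asymptotic:Fcos-sin:critical}, but only in the form $\Kcos(\omega)-\int_0^{1/\omega}K(t)\,\d t=o(|\log\omega|)$, using $|1-\cos x|\le |x|$ on $[1,1/\omega]$ plus an $\epsilon$-window $[1/\omega,1/\omega^{1+\epsilon}]$ that costs $c\,\epsilon|\log\omega|$; your hand computation with $1-\cos x\le x^2/2$ together with the tail bound \eqref{ineq:fcos-fsin} gives the stronger $O(1)$. One caveat: the citation of \cite[Theorem 7]{pitman1968behaviour} is not legitimate here --- in the paper that theorem is invoked under the regular-variation hypothesis \eqref{cond:powerlaw:t}, which is precisely what you do not yet have in the Tauberian direction --- so your ``by hand'' bound is the correct justification, not an alternative. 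Either way, both you and the paper reduce the proposition to the implication: $\int_0^x K(t)\,\d t\sim C_1\log x$ implies $xK(x)\to C_1$.

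It is at this point that your proposal stops, and your reasons for stopping are better than the paper's reasons for proceeding. The paper's entire treatment of this step is the sentence ``by the same reasoning as in \eqref{e:lim_int_K/log-omega}, $C_1=\lim_{x\to\infty}xK(x)$'' --- that is, L'H\^opital's rule run in reverse, which is exactly the fallacy you flag: the forward direction in \eqref{e:lim_int_K/log-omega} was legitimate only because $K(t)\sim t^{-1}$ was assumed there, so the limit of $xK(x)$ existed. Your suspicion that monotonicity plus \eqref{e:suptK(t)<infty} cannot exclude oscillation of $tK(t)$ is vindicated in the strongest sense: under the stated hypotheses the implication is \emph{false}. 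Let $g$ be the triangle wave of period $12$ oscillating between $1/2$ and $2$ with slopes $\pm 1/4$, and set $K(t)=g(\log t)/t$ for $t\ge 1$, $K(t)=1/2$ on $(0,1]$. Since $|g'|\le 1/4<1/2\le g$, one has $K'(t)=\big(g'(\log t)-g(\log t)\big)/t^2<0$, so $K$ is positive, decreasing, tends to $0$, and $\sup_{t\ge1}tK(t)=2$. Moreover $\int_1^x K(t)\,\d t=\int_0^{\log x}g(u)\,\d u=\tfrac54\log x+O(1)$, so by the step-one reduction $\Kcos(\omega)/|\log\omega|\to \tfrac54$, and the hypothesis $\Kcos(\omega)\sim|\log\omega|$ holds (the paper's $\sim$ allows any constant in $(0,\infty)$). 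Yet $tK(t)=g(\log t)$ oscillates between $1/2$ and $2$ forever, so \eqref{e:K(t)_decays_t^{(-1)}} fails --- and note this is precisely the ``slow oscillation of $tK(t)$ between distinct accumulation points'' you predicted would be the obstruction.

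The practical upshot for your plan: the gap in your step two cannot be filled by any Tauberian theorem, Inoue's included, because the hypotheses \eqref{cond1b} and \eqref{e:suptK(t)<infty} simply do not imply the conclusion; the results of \cite{inoue1995abel} carry side conditions (of regular-variation or Schmidt slow-oscillation type for $tK(t)$ on the multiplicative $\log$-scale) that the example above violates. Any repair must add such a hypothesis --- for instance monotonicity of $tK(t)$, or convexity of $K$ (the example has concave corners at the peaks of $g$), or a quantified decay of oscillations of $tK(t)$ over windows $[x,x^{1+\delta}]$. So your proposal is incomplete, but the incompleteness is a defect of the proposition as stated rather than of your attempt: the paper's own proof silently crosses the same gap with an invalid converse-L'H\^opital step, and your analysis, pushed one step further to the counterexample, shows that gap cannot be crossed at all without strengthening the hypotheses.
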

\begin{remark} It can be shown that $K(t)\sim t^{-1}$ as $t\to\infty$ if and only if for every $\lambda>1$, $\Kcos(\lambda \omega)-\Kcos(\omega)\to \log(\lambda)$ as $\omega\to 0$ \cite{inoue1995abel}. However, this statement should not be confused with those of Propositions~\ref{prop:asymptotic:Fcos-sin:critical} and \ref{prop:asymptotic:Fcos-sin:critical:Tauberian}.
\end{remark}
In order to prove Proposition~\ref{prop:asymptotic:Fcos-sin:critical:Tauberian}, we need the following Lemma.

\begin{lemma}\label{lem:asymptotic:Fcos-sin:critical} Suppose $K(t)$ satisfies the conditions of Proposition \ref{prop:asymptotic:Fcos-sin:critical:Tauberian}. %~\eqref{cond1b} and that $\sup_{[1,\infty)}t\,K(t)<\infty$.
Then,
\begin{align}\label{e:lim_Kcos-integ/|log|}
\lim_{\omega\to 0^+} \frac{\Kcos(\omega)-\int_0^{1/\omega}K(t)\d t}{|\log(\omega)|}=0.
\end{align}
\end{lemma}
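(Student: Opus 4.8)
The plan is to show that the \emph{numerator} $\Kcos(\omega)-\int_0^{1/\omega}K(t)\,\d t$ stays bounded as $\omega\to 0^+$; since $|\log(\omega)|\to\infty$, the ratio in \eqref{e:lim_Kcos-integ/|log|} then vanishes automatically. It is worth noting in advance that this argument will use only local integrability, condition~\eqref{cond1b}, and the tail bound~\eqref{e:suptK(t)<infty}; the hypothesis $\Kcos(\omega)\sim|\log(\omega)|$ is not needed here (it is the \emph{combination} of the boundedness proved in this lemma with that hypothesis that forces $\int_0^{1/\omega}K\sim|\log(\omega)|$ in the proof of Proposition~\ref{prop:asymptotic:Fcos-sin:critical:Tauberian}). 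The starting point is the decomposition, valid for $\omega$ small,
\begin{align*}
\Kcos(\omega) - \int_0^{1/\omega}\close K(t)\,\d t = \int_0^{1/\omega}\close K(t)\big(\cos(\omega t) - 1\big)\,\d t + \int_{1/\omega}^\infty\close K(t)\cos(\omega t)\,\d t =: J_1(\omega) + J_2(\omega).
\end{align*}

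First I would dispatch the tail term $J_2(\omega)$. For $\omega$ small enough that $1/\omega\geq A$, the estimate~\eqref{ineq:fcos-fsin} applies at $t=1/\omega$, giving $|J_2(\omega)|\leq 4K(1/\omega)/\omega=4\,(1/\omega)K(1/\omega)$. Setting $M:=\sup_{s\in[1,\infty)}|s\,K(s)|$, which is finite by~\eqref{e:suptK(t)<infty}, and taking $s=1/\omega$, this bound is exactly $4\,|s\,K(s)|\leq 4M$. Hence $J_2$ is bounded uniformly for small $\omega$.

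Next I would handle $J_1(\omega)$ by splitting at $t=1$. On $[0,1]$ the uniform bound $|\cos(\omega t)-1|\leq(\omega t)^2/2\leq\omega^2/2$ together with $K\in L^1_{\text{loc}}$ shows that $\int_0^1 K(t)(\cos(\omega t)-1)\,\d t=O(\omega^2)\to 0$. On $[1,1/\omega]$ I would use the pointwise bound $|K(t)|\leq M/t$, a direct consequence of~\eqref{e:suptK(t)<infty}, to obtain
\begin{align*}
\Big|\int_1^{1/\omega}\close K(t)\big(\cos(\omega t)-1\big)\,\d t\Big| \leq M \int_1^{1/\omega}\close \frac{1-\cos(\omega t)}{t}\,\d t = M\int_\omega^1 \frac{1-\cos(z)}{z}\,\d z,
\end{align*}
after the substitution $z=\omega t$. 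Since $(1-\cos z)/z\sim z/2$ near the origin, the last integral is dominated by the convergent integral $\int_0^1 (1-\cos z)/z\,\d z<\infty$, so $J_1$ is bounded as well.

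Combining the two estimates yields $\Kcos(\omega)-\int_0^{1/\omega}K(t)\,\d t=O(1)$ as $\omega\to 0^+$, from which \eqref{e:lim_Kcos-integ/|log|} follows at once by dividing by $|\log(\omega)|$. I do not anticipate a genuine obstacle: the only delicate point is balancing the two sources of integrability—local integrability near $t=0$ (absorbed by the $[0,1]$ piece of $J_1$) against the $1/t$ tail decay (which makes both $J_2$ and the $[1,1/\omega]$ piece of $J_1$ bounded rather than logarithmically divergent)—and ensuring $\omega$ is small enough that~\eqref{ineq:fcos-fsin} may be invoked at $t=1/\omega$.
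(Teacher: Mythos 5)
Your proof is correct, and it takes a genuinely different (and in fact stronger) route than the paper's. The paper splits the tail into $\int_{1/\omega}^{1/\omega^{1+\epsilon}}+\int_{1/\omega^{1+\epsilon}}^\infty$, bounds the middle range by $c\,\epsilon|\log(\omega)|$ via $|K(t)|\leq M/t$, bounds the far tail by $c\,\omega^\epsilon$ via \eqref{ineq:fcos-fsin} applied at $t=1/\omega^{1+\epsilon}$, and then concludes with a $\limsup\leq c\,\epsilon$ argument, letting $\epsilon\to 0$; it thus only establishes that the numerator is $o(|\log(\omega)|)$. You instead apply \eqref{ineq:fcos-fsin} directly at $t=1/\omega$, where the sup condition \eqref{e:suptK(t)<infty} turns the bound $4K(1/\omega)/\omega$ into the constant $4M$, and you control the range $[1,1/\omega]$ by the substitution $z=\omega t$ and the convergent integral $\int_0^1(1-\cos z)/z\,\d z$ (the paper's cruder bound $|\cos(t\omega)-1|\leq t\omega$ would also do). This shows the numerator is $O(1)$, which is sharper than what the paper proves here, consistent with its citation of Pitman's theorem in the Abelian direction (where the difference is shown to converge under the stronger hypothesis~\eqref{cond:powerlaw:t}), and it would even streamline the proof of Lemma~\ref{lem:Kcos:convergent-rate:critical}, where the paper has to revisit and sharpen its middle-range estimate \eqref{ineq:lem:asymptotic:Fcos-sin:critical:3} using condition~\eqref{cond:K:convergent-rate:critical} precisely to recover the boundedness you obtain from \eqref{e:suptK(t)<infty} alone. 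Two minor points, neither a gap: strictly speaking, Lemma~\ref{lem:fcos-fsin:well-defined} is stated under conditions~\eqref{cond1a} and~\eqref{cond1b}, whereas here only~\eqref{cond1b} is assumed; but the estimate \eqref{ineq:fcos-fsin} needs only eventual monotone decay to zero (which forces $K\geq 0$ on $[A,\infty)$), and the paper's own proof invokes it under the identical hypotheses. Likewise, your use of $\int_0^1|K(t)|\,\d t<\infty$ reads $L^1_{\text{loc}}(0,\infty)$ as including integrability at the origin, exactly as the paper's proof does.
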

\begin{proof}
Fix an arbitrary $\epsilon>0$. We can write
\begin{equation} \label{eqn:lem:asymptotic:Fcos-sin:critical:1}
\begin{aligned}
\Kcos(\omega)-\int_0^{1/\omega}\close K(t)\d t &= \Big\{\int_0^{1}\close+\!\int_1^{1/\omega}\Big\} K(t)(\cos(t\omega)-1)\d t\\
&\qquad+ \Big\{\int_{1/\omega}^{1/\omega^{1+\epsilon}}\close\close+\int_{1/\omega^{1+\epsilon}}^\infty\Big\}  K(t)\cos(t\omega)\d t.
\end{aligned}
\end{equation}
Concerning the first two integrals on the right-hand side of \eqref{eqn:lem:asymptotic:Fcos-sin:critical:1}, without loss of generality, suppose $0 < \omega < 1$. Then,
\begin{equation}\label{ineq:lem:asymptotic:Fcos-sin:critical:2}
\begin{aligned}
\Big| \Big\{\int_0^{1}\close+\!\int_1^{1/\omega}\Big\} K(t)(\cos(t\omega)-1)\d t \Big|  &\leq 2\int_0^1\close K(t)\d t+\int_1^{1/\omega}\close\close K(t)\,t\,\omega\d t\\
&\leq 2\int_0^1\close K(t)\d t + c\, \omega \Big(\frac{1}{\omega}-1\Big)\sup_{t \in [1,\infty)}t\, K(t)\\
&\leq 2\int_0^1\close K(t)\d t + c.
\end{aligned}
\end{equation}
where the last inequality follows from condition \eqref{e:suptK(t)<infty}. % in the first implication, we have used the inequality $1-\cos(x)\leq  x $ for every $x\geq 0$, and in the last implication, we have employed the fact that $t\,K(t)$ is bounded on $[1,\infty)$.
Likewise, with regards to the third integral on the right-hand side of~\eqref{eqn:lem:asymptotic:Fcos-sin:critical:1}, %we estimate
\begin{equation}\label{ineq:lem:asymptotic:Fcos-sin:critical:3}
\begin{aligned}
\Big|\int_{1/\omega}^{1/\omega^{1+\epsilon}}\close\close\close K(t)\cos(t\omega)\d t \Big| = \Big|\int_{1/\omega}^{1/\omega^{1+\epsilon}}\close\close \close t\,K(t)\frac{\cos(t\omega)}{t}\d t \Big| \leq c\int_{1/\omega}^{1/\omega^{1+\epsilon}}\close\close\close t^{-1}\d t= c\,\epsilon|\log(\omega)|.
\end{aligned}
\end{equation}
Concerning the last integral on the right-hand side of~\eqref{eqn:lem:asymptotic:Fcos-sin:critical:1}, we note that for $\omega > 0$ sufficiently small, $K(t)$ is decreasing on $[1/\omega^{1+\epsilon},\infty)$. By \eqref{ineq:fcos-fsin},
\begin{equation}\label{ineq:lem:asymptotic:Fcos-sin:critical:4}
\begin{aligned}
\int_{1/\omega^{1+\epsilon}}^\infty \close\close K(t)\cos(t\omega)\d t\leq \frac{4}{\omega}K\Big(\frac{1}{\omega^{1+\epsilon}}\Big)=4\omega^{\epsilon}\frac{1}{\omega^{1+\epsilon}}K\Big(\frac{1}{\omega^{1+\epsilon}}\Big)\leq c\,\omega^{\epsilon},
\end{aligned}
\end{equation}
where the last inequality is a consequence of condition \eqref{e:suptK(t)<infty}. Expressions \eqref{eqn:lem:asymptotic:Fcos-sin:critical:1}--\eqref{ineq:lem:asymptotic:Fcos-sin:critical:4} imply that
\begin{align*}
\frac{|\Kcos(\omega)-\int_0^{1/\omega}K(t)\d t|}{|\log(\omega)|}\leq \frac{c+c\,\epsilon|\log(\omega)|+c\omega^{\epsilon}}{|\log(\omega)|},
\end{align*}
whence
\begin{align*}
\limsup_{\omega\to 0^+} \frac{\Kcos(\omega)-\int_0^{1/\omega}K(t)\d t}{|\log(\omega)|}\leq c\,\epsilon,
\end{align*}
where the constant $c>0$ is independent of $\epsilon$. Since $\epsilon > 0$ is arbitrary, \eqref{e:lim_Kcos-integ/|log|} holds. %The resulting limit is thus obtained by shrinking $\epsilon$ further to zero.
\end{proof}
With Lemma~\ref{lem:asymptotic:Fcos-sin:critical} in hand, the proof of Proposition~\ref{prop:asymptotic:Fcos-sin:critical:Tauberian}, provided next, is relatively short.
\begin{proof}[Proof of Proposition~\ref{prop:asymptotic:Fcos-sin:critical:Tauberian}] %Since $\sup_{t \in [1,\infty)}t\, K(t)$ is finite, Lemma~\ref{lem:asymptotic:Fcos-sin:critical} implies that
%\begin{align*}
%\frac{\Kcos(\omega)-\int_0^{1/\omega}K(t)\d t}{|\log(\omega)|}\to 0,\qquad\omega\to 0^+.
%\end{align*}
Consider the decomposition~\eqref{eqn:prop:asymptotic:Fcos-sin:critical:1}. By Lemma~\ref{lem:asymptotic:Fcos-sin:critical}, the first quotient on the right-hand side vanishes as $\omega \rightarrow 0^+$. It follows that %Recalling from~\eqref{eqn:prop:asymptotic:Fcos-sin:critical:1}, we readily see that
\begin{align*}
\lim_{\omega\to 0^+}\frac{\int_0^{1/\omega}K(t)\d t}{|\log(\omega)|}=\lim_{\omega\to 0}\frac{\Kcos(\omega)}{|\log(\omega)|}=C_1>0.
\end{align*}
By the same reasoning as in \eqref{e:lim_int_K/log-omega}, $C_1=\lim_{x\to\infty}xK(x)$, which shows \eqref{e:K(t)_decays_t^(-1)}.
%which completes the proof.
\end{proof}

While Proposition~\ref{prop:asymptotic:Fcos-sin:critical} is sufficient for determining the large-time asymptotic growth of the MSD in the critical regime, it does not provide information on the convergence rate of the Fourier-type transforms \eqref{e:Kcos_Ksin} near the origin.
We will see later in the proof of Theorem~\ref{thm:convergent-rate:diffusion+subdiffusion} and \ref{thm:convergent-rate:critical} that this information is crucial in establishing the growth rate of the MSD in all regimes.

We now state and show three auxiliary results (Lemmas~\ref{lem:Kcos:convergent-rate:diffusion}, \ref{lem:Kcos:convergent-rate:subdiffusion} and \ref{lem:Kcos:convergent-rate:critical}) that are used in Section~\ref{sec:MSD:convergent-rate} to establish the convergence rate of the MSD towards its limit in each regime. We start off with the diffusive regime.

\begin{lemma}[Diffusive regime] \label{lem:Kcos:convergent-rate:diffusion} Suppose that $K$ satisfies conditions \eqref{cond1a}, \eqref{cond1b} and~\eqref{cond:K:convergent-rate:diffusion}. Then, for constants $c_1,c_2 > 0$ and $\omega \in \bbR$,
\begin{align}\label{e:Kcos-Kcos0}
|\Kcos(\omega)-\Kcos(0)|\leq c_1 \hspace{1mm}\omega^{\gamma_{0}}
\end{align}
and
\begin{align}\label{e:|Ksin|=O(omega^gamma01)}
|\Ksin(\omega)| \leq c_2 \hspace{1mm}\omega^{\gamma_{0,1}},
\end{align}
where $\gamma_{0}=\min\{\beta_0,2\}$, $\gamma_{0,1}=\min\{\beta_0,1\}$  and $\beta_0$ is the exponent constant from~\eqref{cond:K:convergent-rate:diffusion}.
\end{lemma}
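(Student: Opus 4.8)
The plan is to exploit the two integrability conditions in~\eqref{cond:K:convergent-rate:diffusion}, namely $K \in L^1(0,\infty)$ and $t^{\beta_0} K(t) \in L^1(0,\infty)$, together with elementary interpolation bounds on $\cos$ and $\sin$. Since $K \in L^1(0,\infty)$, the value $\Kcos(0) = \int_0^\infty K(t)\,\d t$ is finite; moreover $\Kcos$ is even and $\Ksin$ is odd, so it suffices to treat $\omega > 0$ and replace $\omega^{\gamma_0}$ by $|\omega|^{\gamma_0}$ in the statements. First I would write the quantities of interest as
\[
\Kcos(\omega)-\Kcos(0)=\int_0^\infty K(t)\big(\cos(t\omega)-1\big)\,\d t,\qquad \Ksin(\omega)=\int_0^\infty K(t)\sin(t\omega)\,\d t .
\]

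The key elementary step is to record the interpolation inequalities $|\cos(x)-1|\le c\,|x|^{\theta}$, valid for every $\theta\in[0,2]$, and $|\sin(x)|\le |x|^{\theta}$, valid for every $\theta\in[0,1]$. Both follow by interpolating the trivial bounds ($|\cos x - 1|\le 2$ and $|\cos x-1|\le x^2/2$ for the first; $|\sin x|\le 1$ and $|\sin x|\le |x|$ for the second). Applying these with $\theta=\gamma_0=\min\{\beta_0,2\}$ and $\theta=\gamma_{0,1}=\min\{\beta_0,1\}$, respectively, and pulling the factor $\omega^{\theta}$ out of the integral yields
\[
|\Kcos(\omega)-\Kcos(0)|\le c\,\omega^{\gamma_0}\int_0^\infty K(t)\,t^{\gamma_0}\,\d t,\qquad |\Ksin(\omega)|\le \omega^{\gamma_{0,1}}\int_0^\infty K(t)\,t^{\gamma_{0,1}}\,\d t .
\]
It then remains only to verify that the two moment integrals are finite. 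Since $\gamma_0\le\beta_0$ and $\gamma_{0,1}\le\beta_0$, I would bound $t^{\gamma}\le 1+t^{\beta_0}$ for all $t>0$ (splitting at $t=1$: the exponent $\gamma$ is dominated by $0$ on $(0,1]$ and by $\beta_0$ on $[1,\infty)$). Hence each moment integral is at most $\int_0^\infty K(t)\,\d t+\int_0^\infty t^{\beta_0}K(t)\,\d t<\infty$ by~\eqref{cond:K:convergent-rate:diffusion}, so one may take $c_1=c(\gamma_0)\big(\int_0^\infty K + \int_0^\infty t^{\beta_0}K\big)$ and $c_2=\int_0^\infty K + \int_0^\infty t^{\beta_0}K$. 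For large $|\omega|$ the stated inequalities hold trivially, since their left-hand sides are uniformly bounded while $|\omega|^{\gamma}\to\infty$, so the estimates are valid for all $\omega\in\bbR$.

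I do not expect a serious obstacle; the argument is essentially bookkeeping built on the two moment bounds. The only point requiring a little care is matching the caps in $\gamma_0=\min\{\beta_0,2\}$ and $\gamma_{0,1}=\min\{\beta_0,1\}$ to the intrinsic behavior of $\cos$ and $\sin$ near the origin: one cannot extract from $\cos(t\omega)-1$ a decay faster than $\omega^2$, nor from $\sin(t\omega)$ a decay faster than $\omega$, no matter how large $\beta_0$ is, which is precisely why the exponents are truncated at $2$ and $1$ and why the extra regularity $t^{\beta_0}K\in L^1$ beyond $\beta_0=2$ (resp.\ $\beta_0=1$) cannot be converted into a sharper rate.
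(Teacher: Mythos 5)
Your proposal is correct and follows essentially the same route as the paper's proof: the interpolation bounds $|1-\cos x|\le c|x|^{\theta}$ for $\theta\in[0,2]$ and $|\sin x|\le|x|^{\theta}$ for $\theta\in[0,1]$, applied with $\theta=\gamma_0$ and $\theta=\gamma_{0,1}$, followed by finiteness of the moment integrals via the two hypotheses $K\in L^1(0,\infty)$ and $t^{\beta_0}K\in L^1(0,\infty)$. If anything, your splitting $t^{\gamma}\le 1+t^{\beta_0}$ at $t=1$ is slightly more careful than the paper's displayed bound $\int_0^\infty K(t)t^{\gamma_0}\,\d t\le\int_0^\infty K(t)t^{\beta_0}\,\d t$, which silently needs exactly this splitting on $(0,1)$ when $\gamma_0<\beta_0$.
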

\begin{remark} The bounds $\gamma_0 \leq 2$ and $\gamma_{0,1}\leq 1$ in Lemma \ref{lem:Kcos:convergent-rate:diffusion} cannot be improved regardless of how large $\beta_0$ is. This seems to be the simplest formulation. To see this, consider the memory kernel instance $K(t)=e^{-|t|}$. Then, $t^{\beta_0}K(t)$ is integrable for every $\beta_0>0$. Moreover, its Fourier-type transforms are given by
\begin{align*}
\Kcos(\omega)=\frac{1}{1+\omega^2},\quad\text{and}\quad \Ksin(\omega)=\frac{\omega}{1+\omega^2}.
\end{align*}
It is straightforward to verify that, for the above $K$, $\gamma_0=2$ and $\gamma_{0,1}=1$.
\end{remark}

\begin{proof}[Proof of Lemma \ref{lem:Kcos:convergent-rate:diffusion} ] %To prove the estimate on $\Ksin$, we will employ the inequality $\sin(x)\leq x^{\gamma_{0,1}}$ for every $x\geq 0$ as follows.

We first show \eqref{e:|Ksin|=O(omega^gamma01)}. In fact, by applying the elementary bound $|\sin(x)|\leq x^{\gamma_{0,1}}$, $x\geq 0$ and condition~\eqref{cond:K:convergent-rate:diffusion},
\begin{align*}
|\Ksin(\omega)| = \Big|\int_0^\infty\close K(t)\sin(t\omega)\d t \Big| &\leq \omega \int_0^1\close t\,K(t)\d t+\omega^{\gamma_{0,1}}\int_1^\infty \close t^{\gamma_{0}}K(t)\d t\\
&\leq  \omega \int_0^1\close K(t)\d t+\omega^{\gamma_{0,1}}\int_1^\infty \close t^{\beta_0}K(t)\d t\\
&=O(\omega^{\gamma_{0,1}}).
\end{align*}
%since $t^{\beta_0}K(t)$ is integrable by condition~\eqref{cond:K:convergent-rate:diffusion}.

Next, we prove \eqref{e:Kcos-Kcos0}. For every $\omega > 0$,
\begin{align*}
|\Kcos(\omega)-\Kcos(0)|= \Big| \int_0^\infty\close K(t)(1-\cos(t\omega))\d t \Big|
\leq c\int_0^\infty\close K(t)t^{\gamma_{0}}\omega^{\gamma_{0}}\d t,
\end{align*}
where we use the inequality $1-\cos(x)\leq c |x|^{\gamma_{0}}$ for any $\gamma_{0}\in[0,2]$. It follows that
\begin{align*}
|\Kcos(\omega)-\Kcos(0)| \leq c\,\omega^{\gamma_{0}}\int_0^\infty\close K(t)t^{\beta_0}\d t,
\end{align*}
which implies \eqref{e:Kcos-Kcos0}.
\end{proof}

In regard to the convergence rate of the Fourier transforms in the subdiffusive regime, we have the following lemma.
\begin{lemma}[Subdiffusive regime] \label{lem:Kcos:convergent-rate:subdiffusion} Suppose that $K$ satisfies conditions \eqref{cond1a},~\eqref{cond1b} and~\eqref{cond:K:convergent-rate:subdiffusion}. Then, as $\omega\to 0$,
\begin{align}\label{e:w^(1-alpha)*Kcos-integ}
\Big|\omega^{1-\alpha}\Kcos(\omega)-C_\alpha\int_0^\infty \frac{\cos(z)}{z^\alpha}\d z\Big|= O(\omega^{\gamma_{\alpha}})
\end{align}
and
\begin{align}\label{e:w^(1-alpha)*Ksin-integ}
\Big|\omega^{1-\alpha}\Ksin(\omega)-C_\alpha\int_0^\infty \frac{\sin(z)}{z^\alpha}\d z\Big|= O(\omega^{\gamma_{\alpha}}),
\end{align}
where $C_\alpha=\lim_{t\to \infty}t^{\alpha}K(t)$, $\gamma_{\alpha}=\min\{1-\alpha,\alpha\beta_{\alpha}\}$ and $\alpha,\,\beta_{\alpha}$ are the exponent constants from~\eqref{cond:K:convergent-rate:subdiffusion}.
\end{lemma}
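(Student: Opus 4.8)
The plan is to isolate the exact contribution of the pure power law $t^{-\alpha}$, whose Fourier--type transforms can be computed in closed form by scaling, and then to show that the remainder decays at the prescribed rate. First I would record the elementary scaling identities obtained from the substitution $z = t\omega$,
\[
\int_0^\infty t^{-\alpha}\cos(t\omega)\,\d t = \omega^{\alpha-1}\int_0^\infty \frac{\cos z}{z^\alpha}\,\d z, \qquad \int_0^\infty t^{-\alpha}\sin(t\omega)\,\d t = \omega^{\alpha-1}\int_0^\infty \frac{\sin z}{z^\alpha}\,\d z,
\]
where both integrals on the right converge (conditionally) for $\alpha\in(0,1)$. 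Multiplying by $\omega^{1-\alpha}$ shows that $C_\alpha\int_0^\infty z^{-\alpha}\cos z\,\d z$ equals exactly $\omega^{1-\alpha}C_\alpha\int_0^\infty t^{-\alpha}\cos(t\omega)\,\d t$, and likewise for the sine, so the two target constants in \eqref{e:w^(1-alpha)*Kcos-integ}--\eqref{e:w^(1-alpha)*Ksin-integ} are precisely the rescaled power-law transforms.

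Setting $\phi(t) := K(t) - C_\alpha t^{-\alpha}$, the two estimates reduce to showing $\omega^{1-\alpha}|\int_0^\infty \phi(t)\cos(t\omega)\,\d t| = O(\omega^{\gamma_\alpha})$ and the same with $\sin$ in place of $\cos$. Reading all integrals as improper (conditionally convergent) ones---justified by Lemma~\ref{lem:fcos-fsin:well-defined} for the $K$--part and by the scaling identity for the power-law part---the difference of transforms equals the transform of $\phi$. Condition \eqref{e:K:convergent-rate:subdiffusion} gives $|\phi(t)| = t^{-\alpha}|t^\alpha K(t) - C_\alpha| \le c\,t^{-\alpha-\beta_\alpha}$ for $t\ge 1$, while $\phi$ is absolutely integrable on $(0,1)$ because $K\in L^1_{loc}$ and $\alpha<1$.

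Then I would split $\int_0^\infty \phi\,\cos(t\omega)\,\d t = \int_0^1 + \int_1^T + \int_T^\infty$ at a cutoff $T=T(\omega)\to\infty$. The head $\int_0^1$ is bounded by the constant $\int_0^1|\phi|$; the middle $\int_1^T$ is bounded absolutely by $c\int_1^T t^{-\alpha-\beta_\alpha}\,\d t$, which is $O(T^{1-\alpha-\beta_\alpha})$ when $\alpha+\beta_\alpha<1$ and bounded (up to a logarithm) otherwise; and the tail $\int_T^\infty$ is controlled by the oscillatory estimate \eqref{ineq:fcos-fsin} applied separately to $K$ (eventually decreasing) and to $t^{-\alpha}$ (for which the same bound holds verbatim since it is positive and decreasing), giving $|\int_T^\infty \phi\,\cos(t\omega)\,\d t| \le c\,T^{-\alpha}/\omega$ for large $T$, using $K(t)\le c\,t^{-\alpha}$. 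Multiplying the three pieces by $\omega^{1-\alpha}$ and taking $T=\omega^{-(1+\beta_\alpha)}$ balances the growing middle integral against the tail: the tail yields $c(T\omega)^{-\alpha}=c\,\omega^{\alpha\beta_\alpha}$, the middle yields $O(\omega^{\beta_\alpha(\alpha+\beta_\alpha)})$, and the head yields $O(\omega^{1-\alpha})$, all of which are $O(\omega^{\gamma_\alpha})$ with $\gamma_\alpha=\min\{1-\alpha,\alpha\beta_\alpha\}$. The identical argument with the sine half of \eqref{ineq:fcos-fsin} yields \eqref{e:w^(1-alpha)*Ksin-integ}.

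The main obstacle is the tail. Since the remainder $\phi$ is neither monotone nor (when $\alpha+\beta_\alpha\le 1$) absolutely integrable at infinity, one cannot bound $\int_T^\infty \phi\,\cos(t\omega)\,\d t$ directly and must instead exploit the oscillation of $\cos(t\omega)$ through \eqref{ineq:fcos-fsin} applied to the monotone pieces $K$ and $t^{-\alpha}$ separately. This is also what forces the cutoff to satisfy $T\omega\to\infty$ so that the tail is small, while simultaneously keeping the middle integral from growing too fast; the choice $T=\omega^{-(1+\beta_\alpha)}$ is the one that makes both contributions land precisely at the exponent $\gamma_\alpha$.
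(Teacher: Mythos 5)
Your proposal is correct and takes essentially the same route as the paper: subtracting the pure power law first and transforming $\phi(t)=K(t)-C_\alpha t^{-\alpha}$ is, after the substitution $z=t\omega$, exactly the paper's decomposition of both transforms at the cutoffs $1$ and $\omega^{-1-\beta_\alpha}$ (your $T=\omega^{-(1+\beta_\alpha)}$ coincides with the paper's choice $\delta=\beta_\alpha$), with the same use of the oscillatory bound \eqref{ineq:fcos-fsin} on the two monotone tails, the same pointwise application of \eqref{e:K:convergent-rate:subdiffusion} on the middle range, and the same case split at $\alpha+\beta_\alpha=1$. Your exponent bookkeeping ($\omega^{\alpha\beta_\alpha}$ from the tail, $\omega^{\beta_\alpha(\alpha+\beta_\alpha)}$ from the middle, $\omega^{1-\alpha}$ from the head) matches the paper's and yields the stated $\gamma_\alpha=\min\{1-\alpha,\alpha\beta_\alpha\}$.
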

%\GDcomment{I feel the formulas for $\int_0^\infty\frac{\cos(z)}{z^\alpha}\d z$ and $\int_0^\infty\frac{\sin(z)}{z^\alpha}\d z$ provided in \cite{gradshteyn:ryzhik:2007} are not very enlightening, so let's just stick with the integrals.}

\begin{proof} We only need to prove \eqref{e:w^(1-alpha)*Kcos-integ}: claim \eqref{e:w^(1-alpha)*Ksin-integ} can be shown simply by replacing cosines with sines throughout the argument.

Let $\delta>0$ be a constant that will be chosen later. For $\omega\in(0,1)$, recast
\begin{align}\label{e:w(1-alpha)intKcosdt}
\omega^{1-\alpha}\int_0^\infty\close K(t)\cos(t\omega)\d t  = \omega^{1-\alpha}\Big\{\int_0^1+\int_1^{\omega^{-\delta-1}}\close\close\close+\int_{\omega^{-\delta-1}}^\infty  \Big\} K(t)\cos(t\omega)\d t.
\end{align}
We now proceed to reexpress or construct bounds, in absolute value, for each integral term on the right-hand side of \eqref{e:w(1-alpha)intKcosdt}. In regard to the first term in \eqref{e:w(1-alpha)intKcosdt},
\begin{align}\label{e:w(1-alpha)intKcosdt_1st_integral}
\omega^{1-\alpha}\Big|\int_0^1\close K(t)\cos(t\omega)\d t \Big| \leq \omega^{1-\alpha}\int_0^1\close |K(t)|\d t =O(\omega^{1-\alpha}).
\end{align}
As for the third term in \eqref{e:w(1-alpha)intKcosdt}, assuming $\omega$ is sufficiently small, Lemma~\ref{lem:fcos-fsin:well-defined} implies that
\begin{align}\label{e:w(1-alpha)intKcosdt_3rd_integral}
\omega^{1-\alpha} \Big|\int_{\omega^{-\delta-1}}^\infty\close\close K(t)\cos(t\omega)\d t\Big| &\leq c\,\omega^{1-\alpha}\frac{|K(\omega^{-\delta-1})|}{\omega}  \nonumber  \\
&= c\,\omega^{-\alpha}|K(\omega^{-\delta-1})|\omega^{-(\delta+1)\alpha}\omega^{(\delta+1)\alpha} \nonumber \\
&=O(\omega^{\alpha\delta}).
\end{align}
In \eqref{e:w(1-alpha)intKcosdt_3rd_integral}, the last equality is a consequence of the fact that $t^\alpha K(t)$ is bounded as $t\to\infty$. Moreover, by a change of variable $z=t\omega$, the middle (second) integral term in \eqref{e:w(1-alpha)intKcosdt} can be rewritten as
\begin{align}\label{e:w(1-alpha)intKcosdt_2nd_integral}
\omega^{1-\alpha}\int_1^{\omega^{-\delta-1}}\close\close\close K(t)\cos(t\omega)\d t=\int_\omega^{\omega^{-\delta}}\close\Big(\frac{z}{\omega}\Big)^\alpha K\Big(\frac{z}{\omega}\Big)\frac{\cos(z)}{z^\alpha}\d z.
\end{align}
Expressions \eqref{e:w(1-alpha)intKcosdt_1st_integral}, \eqref{e:w(1-alpha)intKcosdt_3rd_integral} and \eqref{e:w(1-alpha)intKcosdt_2nd_integral} imply that
\begin{align}\label{e:w(1-alpha)intKcosdt_reexpressed}
\omega^{1-\alpha}\int_0^\infty\close K(t)\cos(t\omega)\d t  = O(\omega^{1-\alpha})+\int_\omega^{\omega^{-\delta}}\close\Big(\frac{z}{\omega}\Big)^\alpha K\Big(\frac{z}{\omega}\Big)\frac{\cos(z)}{z^\alpha}\d z+O(\omega^{\alpha\delta}).
\end{align}
Likewise,
\begin{align}\label{e:int_cos(z)/z^alpha dz_reexpressed}
C_\alpha\int_0^\infty \frac{\cos(z)}{z^\alpha}\d z & =  C_\alpha \Big\{\int_0^\omega+\int_\omega^{\omega^{-\delta}}\close\close+\int_{\omega^{-\delta}}^\infty \Big\} \frac{\cos(z)}{z^\alpha}\d z \nonumber \\
&=O(\omega^{1-\alpha})+C_\alpha\int_\omega^{\omega^{-\delta}}\frac{\cos(z)}{z^\alpha}\d z+O(\omega^{\alpha\delta}).
\end{align}
By \eqref{e:w(1-alpha)intKcosdt_reexpressed} and \eqref{e:int_cos(z)/z^alpha dz_reexpressed}, % Collecting everything now yields
\begin{multline}\label{e:w(1-alpha)intKcosdt_- int_cos(z)/z^alpha dz_reexpressed}
\omega^{1-\alpha}\int_0^\infty\close K(t)\cos(t\omega)\d t-C_\alpha\int_0^\infty \frac{\cos(z)}{z^\alpha}\d z\\ =O(\omega^{1-\alpha})+O(\omega^{\alpha\delta})+\int_\omega^{\omega^{-\delta}}\Big[\Big(\frac{z}{\omega}\Big)^\alpha K\Big(\frac{z}{\omega}\Big)-C_\alpha\Big]\frac{\cos(z)}{z^\alpha}\d z.
\end{multline}
In regard to the integral term on the right-hand side of \eqref{e:w(1-alpha)intKcosdt_- int_cos(z)/z^alpha dz_reexpressed}, we invoke~\eqref{cond:K:convergent-rate:subdiffusion} to arrive at the bound
\begin{align}\label{e:w^(1-alpha)_int_Kcosdt-Cint_cos(z)/z^alphadz}
\Big|\int_\omega^{\omega^{-\delta}}\Big[\Big(\frac{z}{\omega}\Big)^\alpha K\Big(\frac{z}{\omega}\Big)-C_\alpha\Big]\frac{\cos(z)}{z^\alpha}\d z \Big| \leq c\,\omega^{\beta_{\alpha}}\int_\omega^{\omega^{-\delta}}\close\close \frac{1}{z^{\alpha+\beta_{\alpha}}}\d z.
\end{align}
Turning back to expression \eqref{e:w(1-alpha)intKcosdt}, set $\delta=\beta_{\alpha}$. There are two cases pertaining to the sum $\alpha+\beta_{\alpha}$ in the bound \eqref{e:w^(1-alpha)_int_Kcosdt-Cint_cos(z)/z^alphadz}. First, if $\alpha+\beta_{\alpha}=1$, then
\begin{align}\label{e:omega^(betaalpha)int_1/z^(alpha+betaalpha)_dz_1st_bound}
c\,\omega^{\beta_{\alpha}}\int_\omega^{\omega^{-\delta}}\close\close \frac{1}{z^{\alpha+\beta_{\alpha}}}\d z = c\omega^{\beta_{\alpha}}|\log(\omega)|\leq c\, \omega^{\alpha\beta_{\alpha}}.
\end{align}
Otherwise, i.e., if $\alpha+\beta_{\alpha} \neq 1$, then
\begin{align}\label{e:omega^(betaalpha)int_1/z^(alpha+betaalpha)_dz_2nd_bound}
c\,\omega^{\beta_{\alpha}}\int_\omega^{\omega^{-\delta}}\close\close \frac{1}{z^{\alpha+\beta_{\alpha}}}\d z \leq  c(\omega^{\beta_{\alpha}-\delta(1-\alpha-\beta_{\alpha})}+\omega^{1-\alpha})&=c(\omega^{\beta_{\alpha}-\beta_{\alpha}(1-\alpha-\beta_{\alpha})}+\omega^{1-\alpha}) \nonumber \\
&=O(\omega^{\alpha\beta_{\alpha}}+\omega^{1-\alpha}).
\end{align}
Therefore, by expressions \eqref{e:w(1-alpha)intKcosdt_- int_cos(z)/z^alpha dz_reexpressed}--\eqref{e:omega^(betaalpha)int_1/z^(alpha+betaalpha)_dz_2nd_bound}, %Finally, we collect everything to arrive at
\begin{align*}
\Big|\omega^{1-\alpha}\Kcos(\omega)-C_\alpha\int_0^\infty \frac{\cos(z)}{z^\alpha}\d z \Big| = O(\omega^{\alpha\beta_{\alpha}}+\omega^{1-\alpha}).
\end{align*}
This establishes \eqref{e:w^(1-alpha)*Kcos-integ}.
\end{proof}

Concerning the critical regime, we have the following result.
\begin{lemma}[Critical regime] \label{lem:Kcos:convergent-rate:critical} Suppose that $K$ satisfies conditions \eqref{cond1a},~\eqref{cond1b} and~\eqref{cond:K:convergent-rate:critical}. Then,
\begin{align*}
\Big|\frac{\Kcos(\omega)}{|\log(\omega)|}-C_1 \Big|= O(|\log(\omega)|^{-1}),\quad \omega\to 0^+,
\end{align*}
where $C_1=\lim_{t\to \infty}tK(t)$ (see \eqref{e:K:convergent-rate:critical}).
\end{lemma}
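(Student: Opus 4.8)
The plan is to reuse the decomposition \eqref{eqn:prop:asymptotic:Fcos-sin:critical:1} from the proof of Proposition~\ref{prop:asymptotic:Fcos-sin:critical} and extract a convergence rate from each of its two pieces. I would write
\[
\frac{\Kcos(\omega)}{|\log(\omega)|} = \frac{\Kcos(\omega)-\int_0^{1/\omega}K(t)\d t}{|\log(\omega)|}+\frac{\int_0^{1/\omega}K(t)\d t}{|\log(\omega)|}.
\]
For the first quotient, I would invoke \cite[Theorem 7]{pitman1968behaviour} exactly as in Proposition~\ref{prop:asymptotic:Fcos-sin:critical} to conclude that the numerator $\Kcos(\omega)-\int_0^{1/\omega}K(t)\d t$ converges to a finite constant as $\omega\to 0^+$, and is therefore $O(1)$. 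Dividing by $|\log(\omega)|$ immediately shows that this quotient is $O(|\log(\omega)|^{-1})$.

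The main work lies in the second quotient, where I would show that $\int_0^{1/\omega}K(t)\d t = C_1|\log(\omega)| + O(1)$. Setting $x=1/\omega$ (so that $|\log(\omega)|=\log x$), I split $\int_0^x K(t)\d t = \int_0^1 K(t)\d t + \int_1^x K(t)\d t$; the first integral is a finite constant by local integrability. For the second, I write
\[
\int_1^x K(t)\d t = C_1\log x + \int_1^x\frac{t\,K(t)-C_1}{t}\d t.
\]
Here the strengthened condition \eqref{cond:K:convergent-rate:critical} is essential: since $|t\,K(t)-C_1| = O(t^{-\beta_1})$ by \eqref{e:K:convergent-rate:critical}, the integrand of the remainder is $O(t^{-1-\beta_1})$, which is integrable on $[1,\infty)$. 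Hence $\int_1^x\frac{t\,K(t)-C_1}{t}\d t$ converges to a finite limit and is $O(1)$ uniformly in $x$. Consequently $\int_0^{1/\omega}K(t)\d t - C_1|\log(\omega)| = O(1)$, and dividing by $|\log(\omega)|$ yields $\frac{\int_0^{1/\omega}K(t)\d t}{|\log(\omega)|} - C_1 = O(|\log(\omega)|^{-1})$. Adding the two estimates gives the claim.

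I expect the only delicate point to be recognizing precisely why the mere asymptotic $t\,K(t)\to C_1$ is insufficient and the polynomial rate \eqref{e:K:convergent-rate:critical} is genuinely needed. Without it, the remainder integral $\int_1^x \frac{t\,K(t)-C_1}{t}\d t$ could grow (for instance like $\log\log x$ if $t\,K(t)-C_1\sim 1/\log t$), which is still $o(\log x)$---enough to drive the limit in Proposition~\ref{prop:asymptotic:Fcos-sin:critical} but not enough to produce the sharper $O(|\log(\omega)|^{-1})$ rate claimed here. The integrability of the $O(t^{-1-\beta_1})$ tail is exactly what upgrades this $o(\log x)$ to an $O(1)$ bound, so I would make a point of isolating that step cleanly.
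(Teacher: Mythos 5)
Your proof is correct. Its second half---showing $\int_0^{1/\omega}K(t)\,\d t = C_1|\log(\omega)|+O(1)$ by writing $\int_1^x K(t)\,\d t = C_1\log x + \int_1^x (t\,K(t)-C_1)\,t^{-1}\,\d t$ and using that \eqref{e:K:convergent-rate:critical} makes the remainder integrand $O(t^{-1-\beta_1})$, hence integrable---coincides exactly with the paper's estimate \eqref{ineq:lem:Kcos:convergent-rate:critical:4}. Where you genuinely diverge is the first quotient. The paper does \emph{not} re-invoke \cite[Theorem 7]{pitman1968behaviour} there; instead it reruns the decomposition \eqref{eqn:lem:asymptotic:Fcos-sin:critical:1} from Lemma~\ref{lem:asymptotic:Fcos-sin:critical} and sharpens the one estimate that was only $O(\epsilon|\log(\omega)|)$, namely the middle-range integral $\int_{1/\omega}^{1/\omega^{1+\epsilon}}K(t)\cos(t\omega)\,\d t$: after the change of variable $z=t\omega$, the rate hypothesis \eqref{e:K:convergent-rate:critical} bounds it by $c(\omega^{\beta_1}+1)$, so the whole numerator $\Kcos(\omega)-\int_0^{1/\omega}K(t)\,\d t$ is $O(1)$ by hand. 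Your shortcut of citing Pitman's Abelian result, exactly as in the proof of Proposition~\ref{prop:asymptotic:Fcos-sin:critical}, is legitimate: the hypotheses used there (local integrability, \eqref{cond1b}, and $K(t)\sim t^{-1}$) are all implied by those of the present Lemma, and convergence of the numerator to a finite constant as $\omega\to 0^+$ gives the $O(1)$ bound you need. The trade-off: your route is shorter and makes visible that the polynomial rate in \eqref{cond:K:convergent-rate:critical} enters only through the main term, the first quotient being $O(|\log(\omega)|^{-1})$ already under the weaker Abelian hypotheses; the paper's route is self-contained given its own Lemma~\ref{lem:asymptotic:Fcos-sin:critical}, produces an explicit quantitative bound $c+c(\omega^{\beta_1}+1)+c\,\omega^{\epsilon}$ on the numerator rather than a soft limit from an external theorem, and displays concretely how $\beta_1$ upgrades the $c\,\epsilon|\log(\omega)|$ term. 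Your closing observation about why the mere limit $t\,K(t)\to C_1$ is insufficient (e.g., $t\,K(t)-C_1\sim 1/\log t$ would make the remainder grow like $\log\log x$, compatible with Proposition~\ref{prop:asymptotic:Fcos-sin:critical} but not with the claimed rate) is accurate and correctly isolates the role of \eqref{e:K:convergent-rate:critical}.
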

\begin{proof}
Recast
\begin{align}\label{eqn:lem:Kcos:convergent-rate:critical:1}
\frac{\Kcos(\omega)}{|\log(\omega)|}-C_1=\frac{\Kcos(\omega)-\int_0^{1/\omega}K(t)\d t}{|\log(\omega)|}+\frac{\int_0^{1/\omega}K(t)\d t}{|\log(\omega)|}-C_1.
\end{align}
To construct a bound for the first ratio on the right-hand side of \eqref{eqn:lem:Kcos:convergent-rate:critical:1}, we shall improve upon the proof of Lemma~\ref{lem:asymptotic:Fcos-sin:critical}. To be precise, we sharpen the estimate~\eqref{ineq:lem:asymptotic:Fcos-sin:critical:3} by making the change of variable $z=t\omega$, i.e.,
\begin{equation}
\begin{aligned}\label{ineq:lem:asymptotic:Fcos-sin:critical:3-sharper}
\int_{1/\omega}^{1/\omega^{1+\epsilon}}\close\close\close K(t)\cos(t\omega)\d t &= \int_{1}^{1/\omega^{\epsilon}}\close \Big(\frac{z}{\omega}\Big)K\Big(\frac{z}{\omega}\Big)\frac{\cos(z)}{z}\d z\\
&= \int_{1}^{1/\omega^{\epsilon}}\close \Big[\Big(\frac{z}{\omega}\Big)K\Big(\frac{z}{\omega}\Big)-C_1\Big]\frac{\cos(z)}{z}\d z+C_1\int_{1}^{1/\omega^{\epsilon}}\frac{\cos(z)}{z}\d z.
\end{aligned}
\end{equation}
It is clear that the second integral on the right-hand side of \eqref{ineq:lem:asymptotic:Fcos-sin:critical:3-sharper} converges to $C_1\int_1^\infty\frac{\cos(z)}{z}\d z$ as $\omega\to 0$. Concerning the first integral, we invoke~\eqref{cond:K:convergent-rate:critical} to arrive at
\begin{align*}
\Big|\int_{1}^{1/\omega^{\epsilon}}\close \Big[\Big(\frac{z}{\omega}\Big)K\Big(\frac{z}{\omega}\Big)-C_1\Big]\frac{\cos(z)}{z}\d z \Big|
\leq \omega^{\beta_1}\int_{1}^{1/\omega^{\epsilon}}\frac{|\cos(z)|}{z^{1+\beta_1}}\d z \leq c\,\omega^{\beta_1},
\end{align*}
whence
\begin{align}\label{ineq:lem:Kcos:convergent-rate:critical:2}
\Big|\int_{1/\omega}^{1/\omega^{1+\epsilon}}\close K(t)\cos(t\omega)\d t  \Big| \leq c(\omega^{\beta_1}+1).
\end{align}
Combining~\eqref{ineq:lem:Kcos:convergent-rate:critical:2}, \eqref{eqn:lem:asymptotic:Fcos-sin:critical:1}, \eqref{ineq:lem:asymptotic:Fcos-sin:critical:2} and \eqref{ineq:lem:asymptotic:Fcos-sin:critical:4} yields the estimate
\begin{align}\label{ineq:lem:Kcos:convergent-rate:critical:3}
\Big|\frac{\Kcos(\omega)-\int_0^{1/\omega}K(t)\d t}{|\log(\omega)|} \Big|\leq \frac{c+c(\omega^{\beta_1}+1)+c\omega^{\epsilon}}{|\log(\omega)|}=O(|\log(\omega)|^{-1}).
\end{align}
With regards to the second term on the right-hand side of~\eqref{eqn:lem:Kcos:convergent-rate:critical:1}, it is straightforward to see that
\begin{equation}\label{ineq:lem:Kcos:convergent-rate:critical:4}
\begin{aligned}
\Big|\frac{\int_0^{1/\omega}K(t)\d t}{|\log(\omega)|}-C_1 \Big| &= \Big|\frac{1}{|\log(\omega)|}\int_0^{1}\close K(t)\d t+\frac{1}{|\log(\omega)|}\int_1^{1/\omega}\frac{t\,K(t)-C_1}{t}\d t \Big| \\
&\leq \frac{1}{|\log(\omega)|}\int_0^{1}\close K(t)\d t+\frac{c}{|\log(\omega)|}\int_1^{1/\omega}\close\frac{1}{t^{1+\beta_1}}\d t\\
&=O(|\log(\omega)|^{-1}).
\end{aligned}
\end{equation}
The result now follows immediately from~\eqref{ineq:lem:Kcos:convergent-rate:critical:3} and~\eqref{ineq:lem:Kcos:convergent-rate:critical:4}. The proof is thus complete.
\end{proof}

Let $\Sc$ be the Schwartz space of all smooth functions whose derivatives are rapidly decreasing. Recall that its dual space $\Sc'$ is the so-named class of tempered distributions on $\Sc$. For a given tempered distribution $g\in\Sc'$, $\Fcal[g]\in\Sc'$ denotes the Fourier transform of $g$ in $\Sc'$. It is well known that this transformation is a one-to-one relation in $\Sc'$. We conclude this section with a proposition on the Fourier transform of $K$, in the sense of tempered distributions, in the critical regime. We make use of Proposition~\ref{prop:temper-distribution:Fcos} later in Section~\ref{sec:wellposed} for the analysis on the well-posedness of~\eqref{eq:gle}.

\begin{proposition}\label{prop:temper-distribution:Fcos} Suppose that $K$ satisfies~\eqref{cond1a}, \eqref{cond1b} and \eqref{cond:powerlaw:t}. Then, $2\Kcos$ is the Fourier transform of $K$ in the sense of tempered distributions, i.e., for every $\f\in\Sc$, %it holds that
\begin{align}\label{e:temper-distribution:Fcos}
\int_\rbb K(t)\widehat{\f}(t)\d t = \int_\rbb 2\Kcos(\omega)\f(\omega)\d\omega.
\end{align}
\end{proposition}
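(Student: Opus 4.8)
The plan is to realize $2\Kcos$ as the distributional Fourier transform of $K$ through an $L^1$ truncation, combined with the elementary multiplication formula and two applications of dominated convergence. First I would record that both sides of \eqref{e:temper-distribution:Fcos} are well-defined. Indeed, $K$ is tempered because it is locally integrable and bounded away from the origin (as $K(t)\sim t^{-1}$), so that $\la \Fcal[K],\f\ra = \la K,\widehat{\f}\ra = \int_\rbb K(t)\widehat{\f}(t)\d t$; and $2\Kcos$ is tempered because, by Lemma~\ref{lem:fcos-fsin:well-defined} and Proposition~\ref{prop:asymptotic:Fcos-sin:critical}, $\Kcos$ is continuous on $(0,\infty)$, vanishes at infinity, and satisfies $\Kcos(\omega)\sim|\log(\omega)|$ as $\omega\to0$, hence is locally integrable (and even in $\omega$). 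Since $\widehat\f$ is Schwartz and $K$ is at most of order $t^{-1}$ at infinity while $K\in L^1_{loc}$ near the origin, the product $K\widehat\f$ belongs to $L^1(\rbb)$.

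Set $K_n := K\,\mathbf{1}_{[-n,n]}$, which lies in $L^1(\rbb)$ by local integrability. For $K_n,\f\in L^1(\rbb)$, Fubini's theorem gives the multiplication formula $\int_\rbb K_n(t)\widehat{\f}(t)\d t = \int_\rbb \widehat{K_n}(\omega)\f(\omega)\d\omega$; using the symmetry of $K$ from condition~\eqref{cond1a}, the sine part cancels and $\widehat{K_n}(\omega)=\int_{-n}^n K(t)e^{it\omega}\d t = 2\int_0^n K(t)\cos(t\omega)\d t =: 2g_n(\omega)$. Hence
\[
\int_{-n}^n K(t)\widehat{\f}(t)\d t = \int_\rbb 2\,g_n(\omega)\f(\omega)\d\omega .
\]
As $n\to\infty$, the left-hand side converges to $\int_\rbb K(t)\widehat{\f}(t)\d t$ by dominated convergence, since $K\widehat\f\in L^1(\rbb)$. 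On the right-hand side, $g_n(\omega)\to\Kcos(\omega)$ pointwise for every $\omega\neq0$ by the definition of the improper integral, so it remains only to exhibit an $n$-independent dominating function that is integrable against $\f$.

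This uniform control of $g_n$ is the crux of the argument. The plan is to prove that there is a constant $c>0$ such that $|g_n(\omega)|\le c\,(1+|\log|\omega||)$ for all $\omega\neq0$ and all large $n$. For $|\omega|\ge1$ I would split $g_n$ at the threshold $A$ of Lemma~\ref{lem:fcos-fsin:well-defined}: the piece $\int_0^A K\cos$ is bounded by $\int_0^A|K|$, while writing $\int_A^n = \int_A^\infty-\int_n^\infty$ and invoking \eqref{ineq:fcos-fsin} bounds the tail by $(4K(A)+4K(n))/|\omega|\le 8K(A)/|\omega|$, so $g_n$ is bounded. For small $\omega$ I would compare $n$ with $1/|\omega|$: the part $\int_0^{\min\{n,1/|\omega|\}}K\cos$ is controlled by $\int_0^1|K| + c\int_1^{1/|\omega|}t^{-1}\d t = O(|\log|\omega||)$ using that $tK(t)$ is bounded, and when $n>1/|\omega|$ the remaining tail $\int_{1/|\omega|}^n K\cos$ is bounded via \eqref{ineq:fcos-fsin} by $(4K(1/|\omega|)+4K(n))/|\omega| = O(1)$, again because $tK(t)$ is bounded; the intermediate band of $\omega$ bounded away from $0$ and $\infty$ is immediate from the same estimates. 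Since $(1+|\log|\omega||)|\f(\omega)|\in L^1(\rbb)$ by the Schwartz decay of $\f$ together with the local integrability of $|\log|\omega||$, dominated convergence yields $\int_\rbb 2g_n\f\to\int_\rbb 2\Kcos\f$, and equating the two limits gives \eqref{e:temper-distribution:Fcos}. The main obstacle is precisely this uniform logarithmic bound, which forces the careful case analysis in $n$ versus $1/|\omega|$ and leans essentially on the tail estimate \eqref{ineq:fcos-fsin}.
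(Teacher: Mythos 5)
Your proposal is correct and takes essentially the same route as the paper's proof: truncate $K_n = K\,1_{[-n,n]}$, apply the multiplication formula for $L^1$ functions to get $\int_\rbb K_n\widehat{\f} = \int_\rbb 2\Kcos^n\f$, pass to the limit on the left by dominated convergence, and control $\Kcos^n$ near the origin using the tail estimate \eqref{ineq:fcos-fsin} together with the boundedness of $tK(t)$. The only (cosmetic) difference is in the domination step: you build a single $n$-independent majorant $c\,(1+|\log|\omega||)$ valid on all of $\rbb\setminus\{0\}$ and apply dominated convergence once, whereas the paper splits at $|\omega|=1/n$, dominating $\Kcos^n$ by $|\Kcos(\omega)|+C$ on $\{|\omega|>1/n\}$ and showing directly that the contribution of $\{|\omega|<1/n\}$ is $O(\log(n)/n)\to 0$.
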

\begin{proof} Since $K$ satisfies~\eqref{cond:powerlaw:t}, then $\Kcos(\omega)\sim |\log(\omega)|$ as $\omega\to 0$ by virtue of Proposition~\ref{prop:asymptotic:Fcos-sin:critical}. It follows that $\Kcos$ is integrable about the origin. Also, by Lemma~\ref{lem:fcos-fsin:well-defined}, it is continuous and converges to zero as $\omega\to\infty$. Thus, for every function $\f \in {\mathcal S}$,
\begin{align*}
\int_\rbb |\Kcos(\omega)\f(\omega)|\d\omega<\infty.
\end{align*}
We now consider a truncation of $K$ by setting $K_n(t)=K(t)1_{[-n,n]}(t)$. Since $K_n$ is integrable and symmetric, then
\begin{align}\label{e:integ_Kn*phi-hat}
\int_\rbb K_n(t)\widehat{\f}(t)\d t = \int_\rbb 2\Kcos^n(\omega)\f(\omega)\d\omega,
\end{align}
where $\Kcos^n(\omega):=\int_0^\infty K_n(t)\cos(t\omega)\d t = \int_0^n K(t)\cos(t\omega)\d t$. As $n\to\infty$, the integral on the left-hand side of \eqref{e:integ_Kn*phi-hat} converges to $\int_\rbb K(t)\widehat{\f}(t)\d t$. To establish \eqref{e:temper-distribution:Fcos}, it remains to show that
\begin{align} \label{lim:prop:temper-distribution:Fcos:1}
\int_\rbb \Kcos^n(\omega)\f(\omega)\d\omega\to\int_\rbb \Kcos(\omega)\f(\omega)\d\omega,\qquad n\to\infty.
\end{align}
To this end, note that, by Lemma~\ref{lem:fcos-fsin:well-defined}, $\Kcos$ is well-defined in the sense of improper Riemann integration. It follows that, for any $\omega\neq 0$, we have
\begin{align*}
\Kcos^n(\omega) = \int_0^n \close K(t)\cos(t\omega)\d t\to\int_0^\infty\close K(t)\cos(t\omega)\d t=\Kcos(\omega),\quad n\to\infty.
\end{align*}
On one hand, for every $|\omega|>1/n$, we have
\begin{align*}
|\Kcos^n(\omega)|\leq |\Kcos^n(\omega)-\Kcos(\omega)|+|\Kcos(\omega)|.
\end{align*}
For $n$ sufficiently large, inequality~\eqref{ineq:fcos-fsin} implies that
\begin{align*}
|\Kcos^n(\omega)-\Kcos(\omega)|&=\Big|\int_n^\infty\close K(t)\cos(t\omega)\d t \Big|\leq \frac{4K(n)}{|\omega|}\leq 4nK(n)< C,
\end{align*}
since $K(t)\sim 1/t$ as $t\to\infty$. Thus, when $|\omega|>1/n$, %it holds that
\begin{align*}
1_{\{|\omega|>1/n\}}(\omega)|\Kcos^n(\omega)|\leq |\Kcos(\omega)|+C.
\end{align*}
As a consequence of the dominated convergence theorem,
\begin{align}\label{e:int_|omega|>1/n}
\int_\rbb 1_{\{|\omega|>1/n\}}(\omega)\Kcos^n(\omega)\f(\omega)\d\omega\to\int_\rbb \Kcos(\omega)\f(\omega)\d\omega,\qquad n\to\infty.
\end{align}
On the other hand, we have that
\begin{align}\label{e:int_|omega|=<1/n}
\int_{|\omega|<1/n}\close \close|\Kcos^n(\omega)\f(\omega)|\d\omega &=\int_{|\omega|<1/n}  \Big|\int_0^n\close K(t)\cos(t\omega)\d t \Big| \hspace{2mm} |\f(\omega)|\d\omega \nonumber \\
%&\leq \int_0^n\close K(t)\d t\sup_{\omega \in \rbb}|\f(\omega)|\int_{|\omega|<1/n}\close\close1\d\omega\\
&\leq \frac{2\sup_{\omega \in \rbb}|\f(\omega)|}{n}\Big[\int_0^1\close K(t)\d t+\int_1^n \close K(t)\d t\Big] \nonumber  \\
&\leq \frac{2\sup_{\omega \in \rbb}|\f(\omega)|}{n}\Big[\int_0^1\close K(t)\d t+c\int_1^n  \frac{1}{t}\d t\Big] \nonumber \\
&= \frac{2\sup_{\omega \in \rbb}|\f(\omega)|}{n}\Big[\int_0^1\close K(t)\d t+c\log(n)\Big]\to 0,
\end{align}
as $n \rightarrow \infty$. Relations \eqref{e:int_|omega|>1/n} and \eqref{e:int_|omega|=<1/n} imply~\eqref{lim:prop:temper-distribution:Fcos:1}, which completes the proof.
\end{proof}

\section{Well-posedness and regularity} \label{sec:wellposed}
We now briefly review the framework of stationary solutions of~\eqref{eq:gle} introduced in~\cite{mckinley2018anomalous}. Let $\nu$ be a non-negative measure on $\rbb$ satisfying the condition
\begin{equation} \label{ineq:spectral-measure}
\int_\rbb \frac{\nu(\d x)}{(1+x^2)^k}<\infty
\end{equation}
for some integer $k$. Also, let $L^2(\Omega)$ be the space of squared integrable complex-valued Gaussian random variables. It is well known that $\nu$ is characterized by some $g \in \Sc' $ -- i.e., a tempered distribution -- and a stationary random distribution
\begin{equation} \label{e:F:S->L2}
F:\Sc\to L^2(\Omega)
\end{equation}
in the sense that, for $\f_1,\,\f_2\in\Sc$, %it holds that
\begin{align} \label{eq:stationary-distribution}
\E{\langle F,\f_1\rangle\overline{\langle F,\f_2\rangle}}= \langle g,\f_1*\widetilde{\f_2}\rangle = \int_\rbb \widehat{\f_1}(\omega)\overline{\widehat{\f_2}(\omega)}\nu(\d\omega),
\end{align}
where $\widetilde{f}(x):=f(-x)$~\cite{ito1954stationary}. In \eqref{eq:stationary-distribution}, $\langle F,\f\rangle$ and $\langle g,\f\rangle$ denote the so-named actions of $F$ and $g$ on $\f\in\Sc$, respectively. Moreover, $g$ is called the \emph{covariance distribution} and $\nu$ is called the \emph{spectral measure} of $F$. If $\nu$ is absolutely continuous with respect to Lebesgue measure, then we can extend $F$ in \eqref{e:F:S->L2} to an operator
\begin{equation} \label{e:V:S'->L^2}
V:\Sc'\to L^2(\Omega)
\end{equation}
such that, for $g_1,\,g_2\in\Sc'$ \cite{mckinley2018anomalous},
\begin{equation} \label{eqn:V.op}
\E{\langle V,g_1\rangle\overline{\langle V,g_2\rangle}}=\int_\rbb \Fcal[g_1](\omega)\overline{\Fcal[g_2](\omega)}\nu(\d\omega).
\end{equation}
The domain of $V$, denoted by
\begin{equation}\label{e:Dom(V)}
\text{Dom}(V),
\end{equation}
consists of those $g \in \Sc$ such that $\Fcal[g]$ is a complex-valued function and $\Fcal[g] \in L^2(\nu)$, the Hilbert space of $\nu$-squared integrable functions. It is worthwhile noting that, for a generic tempered distribution $g$, $\Fcal[g]$ is also a tempered distribution, which may not be a function. However, in order for $g$ to be included in Dom$(V)$, $\Fcal[g]$ has to be a complex-valued function.

Based on the operator $V$ as in \eqref{e:V:S'->L^2}, we can define the velocity and displacement processes $V(t)$ and $X(t)$, respectively, as
\begin{align}\label{def:v(t)-process}
V(t)= \langle V,\delta_t\rangle\quad \text{and}\quad X(t)= \langle V,1_{[0,t]}\rangle.
\end{align}
We now turn to the derivation of weak solutions for the GLE. By formally multiplying the GLE \eqref{eq:gle} by a test function $\f$ in $\Sc$ and integrating by parts, we arrive at the integral equation
\begin{displaymath}
\begin{aligned}
	- m\int_\rbb V(t)\varphi'(t)\d t &=  - \beta \int_\rbb V(t)\int_\rbb K^+(u)\varphi(t+u)\d u\d t + \sqrt{\beta} \int_\rbb F(t)\varphi(t)\d t,
\end{aligned}
\end{displaymath}
where
\begin{equation}\label{e:K+(t)}
K^+\!(t) := K(t) \, 1_{\{t\geq 0\}}.
\end{equation}
Then, for $F$ and $V$ as given by \eqref{e:F:S->L2} and \eqref{e:V:S'->L^2}, respectively, we obtain the weak form of~\eqref{eq:gle}, i.e.,
\begin{equation} \label{eq:gle-weak}
\langle V,-m\varphi'+\beta\widetilde{K^+*\widetilde{\varphi}}\rangle =\sqrt{\beta}\langle F,\varphi\rangle.
\end{equation}
%where $\widetilde{f}(x) := f(-x)$.
In this context, $F$ is understood as a stationary random distribution defined by means of the relation
\begin{displaymath}
\E{\langle F,\varphi_1\rangle\overline{\langle F,\varphi_2\rangle}} = \int_\rbb K(t)\left( \varphi_1*\widetilde{\varphi}_2 \right)(t)\d t.
\end{displaymath}
In view of Proposition~\ref{prop:temper-distribution:Fcos}, for the memory kernel $K$, we have
\begin{displaymath}
\E{\langle F,\varphi_1\rangle\overline{\langle F,\varphi_2\rangle}} = \int_\rbb K(t)\left( \varphi_1*\widetilde{\varphi}_2 \right)(t)\d t=\int_\rbb2\Kcos(\omega)\widehat{\f_1}(\omega)\overline{\widehat{\f_2}(\omega)}\d\omega.
\end{displaymath}
In particular, the spectral measure of $F$ is $2\Kcos(\omega)\d\omega$.

%Having introduced the needed terminology, we have the following definition of a stationary solution of~\eqref{eq:gle}, which is a shorter version of ~\cite[Definition 4.1]{mckinley2018anomalous}
We are now in a position to provide the definition of a stationary solution of~\eqref{eq:gle} (cf.\ ~\cite[Definition 4.1]{mckinley2018anomalous}).
\begin{definition}\cite{mckinley2018anomalous} \label{def:weak-solution}
Let $\nu$ be a nonnegative measure satisfying condition \eqref{ineq:spectral-measure} and let $V$ be the operator associated with $\nu$ defined in \eqref{eqn:V.op}. Also, consider $Dom(V)$ and $K^{+}(t)$ as defined by \eqref{e:Dom(V)} and \eqref{e:K+(t)}, respectively. Then, $V$ is a \textit{weak solution} of \eqref{eq:gle} if the following conditions are satisfied.
\begin{enumerate}[(a)]
\item For every $\varphi\in\Sc$, $K^+*\varphi$ belongs to $\text{Dom}(V)$;
\item for any $\varphi,\psi\in \Sc$,
\begin{align*}
\Enone\Big[\langle V,-m\varphi'+\beta\widetilde{K^+*\widetilde{\varphi}}\rangle\overline{\langle V,-m\psi'+\beta\widetilde{K^+*\widetilde{\psi}}\rangle}\Big]=\Enone\big[\langle\sqrt{\beta}F,\varphi\rangle \overline{\langle \sqrt{\beta}F,\psi\rangle}\big].\end{align*}
\end{enumerate}
\end{definition}

Bearing in mind the above definition of a weak solution, we can now state and establish the well-posedness of~\eqref{eq:gle}.
\begin{theorem} \label{thm:well-posed:critical} Suppose that $K(t)$ satisfies~\eqref{cond1} and~\eqref{cond:powerlaw:t}. Then, $V$ is a weak solution for \eqref{eq:gle} (see Definition~\ref{def:weak-solution}) if and only if the spectral measure $\nu$ satisfies $\nu(\d\omega)=\widehat{r}(\omega)\d\omega$, where $\widehat{r}$ is given by
\begin{equation} \label{eqn:spectral-density}
\widehat{r}(\omega) :=\frac{ \beta\widehat{K}(\omega)}{2\pi\lvert mi\omega+\beta\widehat{K^+}(\omega)\rvert^2}.
\end{equation}
\end{theorem}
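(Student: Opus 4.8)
The plan is to characterize weak solutions via the spectral-measure condition in Definition~\ref{def:weak-solution} and reduce the problem to an equality of measures in the Fourier domain. First I would take the defining operator identity~\eqref{eqn:V.op} and apply it with the test distributions $g_1 = -m\varphi' + \beta\widetilde{K^+ * \widetilde{\varphi}}$ and $g_2 = -m\psi' + \beta\widetilde{K^+ * \widetilde{\psi}}$ appearing on the left-hand side of condition (b). The key computation is to identify $\Fcal[g_1]$ explicitly: using that the Fourier transform sends $\varphi' \mapsto -i\omega\,\widehat{\varphi}$ (up to the sign convention fixed by $\widehat{\f}(\omega)=\int e^{it\omega}\f\,\d t$) and that convolution maps to products, $\widetilde{K^+ * \widetilde{\varphi}}$ transforms into a multiplier involving $\widehat{K^+}$. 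I expect to obtain
\begin{equation*}
\Fcal[-m\varphi' + \beta\widetilde{K^+ * \widetilde{\varphi}}](\omega) = \bigl(mi\omega + \beta\widehat{K^+}(\omega)\bigr)\,\widehat{\varphi}(\omega),
\end{equation*}
so that the left-hand side of (b) becomes $\int_\rbb |mi\omega + \beta\widehat{K^+}(\omega)|^2\,\widehat{\varphi}(\omega)\overline{\widehat{\psi}(\omega)}\,\nu(\d\omega)$. The right-hand side of (b), by the spectral representation of $F$ established just before the theorem, equals $\beta\int_\rbb 2\Kcos(\omega)\,\widehat{\varphi}(\omega)\overline{\widehat{\psi}(\omega)}\,\d\omega$.

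Next I would invoke Proposition~\ref{prop:temper-distribution:Fcos}, which in the critical regime identifies $2\Kcos$ as the distributional Fourier transform $\widehat{K}$ of the (even) kernel $K$; this lets me rewrite the right-hand side as $\beta\int_\rbb \widehat{K}(\omega)\,\widehat{\varphi}(\omega)\overline{\widehat{\psi}(\omega)}\,\d\omega$. Equating the two sides and observing that products $\widehat{\varphi}\,\overline{\widehat{\psi}}$ range over a rich enough class as $\varphi,\psi$ vary over $\Sc$, the weak-solution condition becomes the measure identity
\begin{equation*}
|mi\omega + \beta\widehat{K^+}(\omega)|^2\,\nu(\d\omega) = \beta\,\widehat{K}(\omega)\,\d\omega.
\end{equation*}
Since the factor $|mi\omega + \beta\widehat{K^+}(\omega)|^2$ is real, positive, and locally bounded away from zero for $\omega\neq 0$ (using condition~\eqref{cond1c}, which forces $\Kcos>0$ so the real part of $\beta\widehat{K^+}$ does not cancel $mi\omega$), I can divide through to conclude that $\nu$ is absolutely continuous with density $\rhat(\omega)=\beta\widehat{K}(\omega)\big/\bigl(2\pi|mi\omega+\beta\widehat{K^+}(\omega)|^2\bigr)$, matching~\eqref{eqn:spectral-density} once the $2\pi$ normalization from the Fourier convention is accounted for. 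The converse direction is immediate by reversing these equalities.

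The two steps I expect to require genuine care are as follows. First, the passage from ``the integral identity holds for all $\varphi,\psi\in\Sc$'' to ``the measures agree'' must be justified rigorously: one typically argues that the linear span of $\{\widehat{\varphi}\,\overline{\widehat{\psi}} : \varphi,\psi\in\Sc\}$ is dense in a suitable test-function space, or polarizes and uses that $|\widehat{\varphi}|^2$ determines the measure, together with the integrability of $\widehat{K}=2\Kcos$ near the origin (guaranteed by the logarithmic blow-up $\Kcos(\omega)\sim|\log\omega|$ from Proposition~\ref{prop:asymptotic:Fcos-sin:critical}) so all integrals converge. Second, I must verify the growth/measurability conditions ensuring that this $\rhat$ actually defines a legitimate spectral measure in the sense of~\eqref{ineq:spectral-measure} and that condition (a) of Definition~\ref{def:weak-solution} — namely $K^+ * \varphi \in \dom(V)$ — is satisfied; this reduces to checking that $\bigl(mi\omega+\beta\widehat{K^+}(\omega)\bigr)\widehat{\varphi}(\omega)$ lies in $L^2(\nu)$, which follows from the explicit form of $\rhat$ since the dangerous factor cancels. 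I anticipate the main obstacle to be controlling the behavior near $\omega=0$, where $\widehat{K^+}$ inherits the singular logarithmic real part and the $i\omega$ term vanishes, so that establishing positivity and non-degeneracy of the denominator there — and hence well-definedness of $\rhat$ — is where condition~\eqref{cond1c} and the critical-regime asymptotics of Proposition~\ref{prop:asymptotic:Fcos-sin:critical} do the essential work.
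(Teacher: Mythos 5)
Your proposal is correct and follows essentially the same route as the paper: the paper's proof likewise first verifies integrability of $\widehat{r}$ (using condition~\eqref{cond1c} for strict positivity of the denominator, Lemma~\ref{lem:fcos-fsin:well-defined} for continuity and the $\omega^{-2}$ decay at infinity, and Proposition~\ref{prop:asymptotic:Fcos-sin:critical} for the bound $\widehat{r}(\omega)\leq 1/(\pi\beta\Kcos(\omega))\to 0$ near the origin), and then invokes Proposition~\ref{prop:temper-distribution:Fcos} to adapt the Fourier-multiplier characterization from \cite[Theorem 4.3]{mckinley2018anomalous} --- which is exactly the computation you spell out. The only difference is expository: the paper cites that adaptation rather than reproving it, whereas you reconstruct the multiplier identity, the reduction to the measure equality, and the verification of condition (a) of Definition~\ref{def:weak-solution} explicitly.
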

\begin{remark} Formula~\eqref{eqn:spectral-density} is also the spectral density of the weak solutions in diffusive and subdiffusive regimes \cite{mckinley2018anomalous}.
\end{remark}
\begin{proof} First, we claim that $\rhat$ as given by~\eqref{eqn:spectral-density} is integrable. In fact, we can recast this expression as
\begin{equation} \label{eqn:spectral-density:Kcos-Ksin}
\widehat{r}(\omega) = \frac{1}{2\pi}\hspace{0.5mm}\frac{2\beta\Kcos(\omega)}
{\left[\beta\Kcos(\omega) \right]^2
+\left[m\omega- \beta\Ksin(\omega) \right]^2}.
\end{equation}
Note that $\rhat(\omega)$ is well-defined, since, by condition~\eqref{cond1c}, $\Kcos(\omega)$ is assumed to be strictly positive for every $\omega>0$. Moreover, it is symmetric around zero since the memory kernel $K$ is also so by condition~\eqref{cond1a}. By virtue of Lemma~\ref{lem:fcos-fsin:well-defined}, $\rhat(\omega)$ is continuous for $\omega\in(0,\infty)$. Therefore, we only need to check integrability at $\omega\to\infty$ and around the origin. On one hand, as $\omega\to\infty$, Lemma~\ref{lem:fcos-fsin:well-defined} implies that $\Kcos(\omega)$ and $\Ksin(\omega)$ converge to zero. It follows that $\rhat(\omega)$ is dominated by $\omega^{-2}$. On the other hand, %when $\omega$ is near the origin, we have
\begin{align*}
\rhat(\omega)\leq \frac{1}{\pi\beta\Kcos(\omega)}\to 0,\qquad\omega\to 0.
\end{align*}
By virtue of Proposition~\ref{prop:asymptotic:Fcos-sin:critical}, $\Kcos(\omega)\sim |\log(\omega)|$ as $\omega\to 0$. Therefore, $\rhat$ is integrable, as claimed.

In light of Proposition~\ref{prop:temper-distribution:Fcos}, the remaining claims can be established by a simple adaptation of the proof of \cite[Theorem 4.3]{mckinley2018anomalous}.
\end{proof}
%\begin{remark} The formula of the spectral density $\rhat$ implicitly requires that $\Kcos(\omega)\geq 0$
%\end{remark}

In the last result of this section, we characterize the sample path regularity of the velocity process $V(t)$. Its proof is analogous to that of \cite[Theorems 5.4 and 5.6]{mckinley2018anomalous}, and thus is omitted.

\begin{proposition} \label{prop:regularity}
Under the assumptions of Theorem \ref{thm:well-posed:critical}, let $V(t)$ be the process defined in ~\eqref{def:v(t)-process}.

\noindent(a) Then, there exists a modification $\widetilde{V}(t)$ of $V(t)$ such that $\widetilde{V}(t)$ is a.s.\ continuous.

\noindent(b) Assume, further, that $K$ is a positive definite function and that for some $b>3$
\begin{equation} \label{cond:differentiability}
|K(0)-K(t)|=O\big(|\log t |^{-b}\big),\quad\text{as}\quad t\to 0^+.
\end{equation}
Then, $\widetilde{V}(t)$ as in (a) is a.s.~continuously differentiable.
\end{proposition}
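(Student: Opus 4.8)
The plan is to exploit that, by Theorem~\ref{thm:well-posed:critical}, the velocity process $V(t)=\langle V,\delta_t\rangle$ from~\eqref{def:v(t)-process} is a centered, stationary Gaussian process whose spectral measure has density $\rhat$ as in~\eqref{eqn:spectral-density:Kcos-Ksin}. Hence, for $t,s\in\rbb$, the increment variance is the spectral integral
\begin{equation*}
\E{(V(t)-V(s))^2}=2\int_\rbb\big(1-\cos((t-s)\omega)\big)\,\rhat(\omega)\,\d\omega .
\end{equation*}
Two facts from the proof of Theorem~\ref{thm:well-posed:critical} drive the argument: $\rhat$ is bounded with $\rhat(\omega)\to 0$ as $\omega\to 0$ (since $\Kcos(\omega)\sim|\log(\omega)|$ there, by Proposition~\ref{prop:asymptotic:Fcos-sin:critical}), and $\rhat(\omega)=O(\omega^{-2})$ as $\omega\to\infty$; indeed $\omega^2\rhat(\omega)\sim \beta\Kcos(\omega)/(\pi m^2)$, which vanishes at infinity, so that $\omega^2\rhat(\omega)$ is bounded on all of $\rbb$.

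For part (a), I would invoke Kolmogorov's continuity theorem. Splitting the spectral integral at $|\omega|=1/|t-s|$ and using $1-\cos(x)\le x^2/2$ on the low-frequency range together with $1-\cos(x)\le 2$ and $\rhat(\omega)=O(\omega^{-2})$ on the high-frequency range yields $\E{(V(t)-V(s))^2}=O(|t-s|)$ as $t\to s$. Because the increment is Gaussian, $\E{|V(t)-V(s)|^{2k}}=c_k\,\big(\E{(V(t)-V(s))^2}\big)^{k}=O(|t-s|^{k})$ for every integer $k\ge 1$; taking $k\ge 2$ and applying Kolmogorov's theorem produces an a.s.\ continuous (indeed locally H\"older) modification $\widetilde V$.

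For part (b), positive definiteness of $K$ together with finiteness of $K(0)$ (implicit in~\eqref{cond:differentiability}) makes $2\Kcos\ge 0$ the density of the finite spectral measure of $K$; by Proposition~\ref{prop:temper-distribution:Fcos} and Fourier inversion,
\begin{equation*}
K(0)-K(t)=\frac{2}{\pi}\int_0^\infty\big(1-\cos(t\omega)\big)\Kcos(\omega)\,\d\omega .
\end{equation*}
Setting $t=0$ shows $\int_0^\infty\Kcos(\omega)\,\d\omega=\tfrac{\pi}{2}K(0)<\infty$, and since $\omega^2\rhat(\omega)\sim\beta\Kcos(\omega)/(\pi m^2)$ at infinity this gives finiteness of the second spectral moment, $\int_\rbb\omega^2\rhat(\omega)\,\d\omega<\infty$. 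Consequently $V$ is mean-square differentiable and its derivative $V'$ is stationary Gaussian with spectral density $\omega^2\rhat(\omega)$, so
\begin{equation*}
\E{(V'(t)-V'(s))^2}=2\int_\rbb\big(1-\cos((t-s)\omega)\big)\,\omega^2\rhat(\omega)\,\d\omega .
\end{equation*}
Writing $\omega^2\rhat(\omega)=\tfrac{\beta}{\pi m^2}\Kcos(|\omega|)+R(\omega)$, the main term integrates exactly to a constant multiple of $K(0)-K(t-s)=O\big(|\log|t-s||^{-b}\big)$ via the inversion formula above, while the remainder $R$ (of order $\Kcos(\omega)\Ksin(\omega)/\omega$ at infinity, and dominated near the origin by $(1-\cos)\le\tfrac12(t-s)^2\omega^2$) contributes only a faster-decaying, negligible modulus. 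This gives a logarithmic modulus of continuity for $V'$; feeding it into a metric-entropy (Dudley) criterion for sample continuity of Gaussian processes, as in the proof of \cite[Theorem~5.6]{mckinley2018anomalous}, produces an a.s.\ continuous modification of $V'$, whence $\widetilde V$ is a.s.\ continuously differentiable.

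The main obstacle is the control of the $V'$-increment in part (b): one must confirm that the remainder $R$ is genuinely lower order at both ends of the spectrum and then verify that the resulting standard-deviation modulus $|\log|t-s||^{-b/2}$ makes the Gaussian entropy integral converge. The entropy criterion requires only the surviving logarithmic exponent to exceed $1$, so the stated threshold $b>3$ is comfortably sufficient; the extra margin reflects the cruder pointwise bounds on $\Kcos$ and the non-sharp continuity criterion borrowed from the template \cite{mckinley2018anomalous}, in which a Tauberian passage from~\eqref{cond:differentiability} to a high-frequency tail bound on $\Kcos$ and the subsequent integration each cost one power of $\log$.
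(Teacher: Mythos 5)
Your proposal is correct and follows essentially the same route as the paper, which omits the proof precisely because it is the adaptation you reconstruct of \cite[Theorems 5.4 and 5.6]{mckinley2018anomalous}: Kolmogorov's criterion via the $\E{(V(t)-V(s))^2}=O(|t-s|)$ increment bound coming from the global boundedness of $\omega^2\rhat(\omega)$ (using $\Kcos(\omega)\sim|\log(\omega)|$ near zero from Proposition~\ref{prop:asymptotic:Fcos-sin:critical}) for part (a), and for part (b) Bochner's theorem giving $\Kcos\geq 0$ integrable, finiteness of the second spectral moment, the inversion identity $K(0)-K(h)=\tfrac{2}{\pi}\int_0^\infty(1-\cos(h\omega))\Kcos(\omega)\,\d\omega$ to convert \eqref{cond:differentiability} into a logarithmic modulus for $V'$, and a Gaussian entropy/continuity criterion. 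Your two acknowledged loose ends are genuinely fillable as sketched: the tail of the remainder $R$ is handled by the standard Tauberian bound $\int_{2/h}^\infty\Kcos(\omega)\,\d\omega\leq c\,h^{-1}\int_0^h\big(K(0)-K(s)\big)\d s=O(|\log h|^{-b})$ (valid since $\Kcos\geq 0$ and $1-\sin(h\omega)/(h\omega)\geq 1/2$ for $\omega\geq 2/h$), and the entropy integral indeed converges once the variance exponent exceeds $1$, so $b>3$ is more than enough.
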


\begin{remark}\label{r:harmonizable}
Together, Theorem \ref{thm:well-posed:critical} and \cite[Theorem 4.3]{mckinley2018anomalous} establish the existence of a harmonizable representation
\begin{equation}\label{e:X(t)_harmonizable}
X(t) = \int_{\bbR} \frac{e^{it \omega}-1}{i \omega} \hspace{1mm}\widehat{r}^{1/2}(\omega) \widetilde{B}(d \omega), \quad t \geq 0,
\end{equation}
for the position particle associated with the GLE in all three regimes (critical, diffusive and subdiffusive). In \eqref{e:X(t)_harmonizable}, $\widetilde{B}(d \omega)$ is a $\bbC$-valued Gaussian random measure such that $\widetilde{B}(-d \omega)=\overline{\widetilde{B}(d \omega)}$ and $\bbE |\widetilde{B}(d \omega)|^2 = \theta \hspace{0.5mm}dx$ for some $\theta > 0$. Representations of the type \eqref{e:X(t)_harmonizable} have manifold uses in Probability theory (e.g., \cite{taqqu:samorodnitsky:1994,bierme:meerschaert:scheffler:2007}). In particular, a harmonizable representation of the form \eqref{e:X(t)_harmonizable} is the basis for the construction of the asymptotic distribution of the TAMSD for a broad class of anomalous diffusion models~\cite{didier2017asymptotic}. \end{remark}

\section{Asymptotics of the MSD in the critical regime} \label{sec:MSD:critical}

In this section, we establish the asymptotic behavior of the MSD when $K(t)\sim t^{-1}$ as $t\to\infty$. The approach is similar to that in Section 6 of \cite{mckinley2018anomalous}. For the reader's convenience, we summarize the method as follows.\vspace{0.3cm}

\noindent \textbf{step 1}: we use Proposition~\ref{prop:asymptotic:Fcos-sin:critical} to relate the large-time behavior of the memory $K$ to the near-zero behaviors of $\Kcos(\omega)$ and $\Ksin(\omega)$, i.e., as $\omega\to 0$;\vspace{0.1cm}

\noindent \textbf{step 2}: we obtain the near-zero behavior of the spectral density $\widehat{r}(\omega)$ as in~\eqref{eqn:spectral-density:Kcos-Ksin} through that of $\Kcos(\omega)$ and $\Ksin(\omega)$ as $\omega\to 0$;\vspace{0.1cm}

\noindent \textbf{step 3}: by the dominated convergence theorem and the near-zero behavior of $\rhat(\omega)$, we conclude that $\E{X(t)^2}\sim t/\log(t)$.

\begin{proof}[Proof of Theorem~\ref{thm:asymptotic:critical} ] Using the relation~\eqref{eqn:V.op} and~\eqref{def:v(t)-process}, we note that $\E{X^2(t)}$ can be written explicitly as
\begin{equation} \label{eqn:X(t)}
\begin{aligned}
\E{X(t)^2}=\int_\rbb \left|\widehat{1_{[0,t]}}(\omega)\right|^2\widehat{r}(\omega)\d\omega=4\int_0^\infty\!\! \frac{1-\cos(t\omega)}{\omega^2} \widehat{r}(\omega)\d\omega,
\end{aligned}
\end{equation}
since $\rhat$ is symmetric. It follows that
\begin{equation} \label{eqn:thm:asymptotic:critical:1}
\begin{aligned}
\frac{\log(t) \E{X(t)^2}}{t}&=\frac{4\log(t)}{t}\int_0^\infty\!\! \frac{1-\cos(t\omega)}{\omega^2} \widehat{r}(\omega)\d\omega\\
&=4\log(t)\int_0^\infty\!\! \frac{1-\cos(z)}{z^2}
\widehat{r}\left(\frac{z}{t}\right)\d z,
\end{aligned}
\end{equation}
where the second equality is a consequence of the change of variable $z:=t\omega$. Therefore, it suffices to show that the expression on the right-hand side of \eqref{eqn:thm:asymptotic:critical:1} has a finite and strictly positive limit as $t\to\infty$. In fact, we can split the integral on the right-hand side of \eqref{eqn:thm:asymptotic:critical:1} into two parts, i.e.,
\begin{align*}
\int^{\infty}_0\close \log(t)\frac{1-\cos(z)}{z^2}
\widehat{r}\left(\frac{z}{t}\right)\d z & = \Big\{\int_0^{\sqrt{t}}+\int_{\sqrt{t}}^\infty \Big\}\log(t) \frac{1-\cos(z)}{z^2}
\widehat{r}\left(\frac{z}{t}\right)\d z\\
&=I_1+I_2.
\end{align*}
Concerning $I_2$, recall from the proof of Theorem~\ref{thm:well-posed:critical} that $\rhat(\omega)$ is bounded for $\omega\in (0,\infty)$. Therefore, as $t\to\infty$,
\begin{align*}
0 \leq \hspace{1mm}I_2=\int_{\sqrt{t}}^\infty \log(t) \frac{1-\cos(z)}{z^2}
\widehat{r}\left(\frac{z}{t}\right)\d z&\leq c\log(t)\int_{\sqrt{t}}^\infty  \frac{1-\cos(z)}{z^2}\d z\\
&\leq c\frac{\log(t)}{\sqrt{t}}\to 0.
\end{align*}
With regards to $I_1$, by expression ~\eqref{eqn:spectral-density:Kcos-Ksin} for $\rhat$, we obtain
\begin{align*}
\log(t)\rhat\left(\frac{z}{t}\right)&=\frac{\log(t)}{2\pi}\hspace{1mm}\frac{2\beta\Kcos\left(\frac{z}{t}\right)}
{\left[\beta\Kcos\left(\frac{z}{t}\right) \right]^2
+\left[m\left(\frac{z}{t}\right)- \beta\Ksin\left(\frac{z}{t}\right) \right]^2}\\
&=\frac{\log(t)}{2\pi\log\left(\frac{t}{z}\right)}\hspace{1mm}\frac{2\beta\Kcos\left(\frac{z}{t}\right)/\log\left(\frac{t}{z}\right)}
{\left[\beta\Kcos\left(\frac{z}{t}\right)/\log\left(\frac{t}{z}\right) \right]^2
+\left[m\left(\frac{z}{t}\right)- \beta\Ksin\left(\frac{z}{t}\right) \right]^2/\log^2\left(\frac{t}{z}\right)}.
\end{align*}
Therefore, by Proposition~\ref{prop:asymptotic:Fcos-sin:critical},
\begin{align*}
\log(t)\rhat\left(\frac{z}{t}\right)\to \frac{1}{\pi\beta C_1}\in(0,\infty), \quad t\to\infty,
\end{align*}
where $C_1$ is given by \eqref{e:K:convergent-rate:critical}. Furthermore, assuming $t$ is sufficiently large, for every $z\in(0,\sqrt{t}]$, we have the uniform bound
\begin{align*}
\MoveEqLeft[7]
\frac{\log(t)}{\log\left(\frac{t}{z}\right)}\hspace{1mm}\frac{2\beta\Kcos\left(\frac{z}{t}\right)/\log\left(\frac{t}{z}\right)}
{\left[\beta\Kcos\left(\frac{z}{t}\right)/\log\left(\frac{t}{z}\right) \right]^2
+\left[m\left(\frac{z}{t}\right)- \beta\Ksin\left(\frac{z}{t}\right) \right]^2/\log^2\left(\frac{t}{z}\right)}&\\
&\leq \frac{\log(t)}{\log\left(\frac{t}{z}\right)}\hspace{1mm} \frac{2}
{\beta\Kcos\left(\frac{z}{t}\right)/\log\left(\frac{t}{z}\right)}\\
&\leq \frac{\log(t)}{\log(\sqrt{t})}\hspace{1mm} \frac{c}
{\Kcos\left(\frac{z}{t}\right)/\log\left(\frac{t}{z}\right)}\\
&\leq c<\infty,
\end{align*}
since $\Kcos(\omega)\sim|\log(\omega)|$ as $\omega\to 0$, by virtue of Proposition~\ref{prop:asymptotic:Fcos-sin:critical}. The dominated convergence theorem then implies that
\begin{align*}
I_1=\int_0^{\sqrt{t}}\close \log(t) \frac{1-\cos(z)}{z^2}
\widehat{r}\left(\frac{z}{t}\right)\d z \to \frac{1}{\pi\beta C_1}\int_0^\infty  \frac{1-\cos(z)}{z^2}
\d z\in(0,\infty), \,\text{as}\,\, t\to\infty.
\end{align*}
The result now follows from combining the asymptotics of $I_1$ and $I_2$. The proof is thus complete.

\end{proof}

\section{Robust bounds for the asymptotic behavior of the MSD} \label{sec:MSD:convergent-rate}

In this section, we construct robust bounds on the deviation of the MSD from its asymptotic trend in all three different regimes. By analogy to Section~\ref{sec:MSD:critical}, the general procedure is based on obtaining bounds for the convergence rate of the spectral density $\rhat(\omega)$ as $\omega\to 0$. We begin by stating and showing the following auxiliary result, which is used in the proof of the subsequent Theorem~\ref{thm:convergent-rate:diffusion+subdiffusion}. Note that expression~\eqref{eqn:spectral-density:Kcos-Ksin} for $\rhat(\omega)$ holds in the three regimes (cf.\ \cite[expression (65)]{mckinley2018anomalous}).
\begin{proposition} \label{prop:convergent-rate:diffusion+subdiffusion}
Suppose that $K(t)$ satisfies~\eqref{cond1}. Let $\rhat(\omega)$ be the spectral density function given by~\eqref{eqn:spectral-density:Kcos-Ksin}.

\noindent (a) If $K(t)$ satisfies~\eqref{cond:K:convergent-rate:diffusion}, then
\begin{align*}
\Big|\rhat(\omega)-\frac{1}{\pi\beta\Kcos(0)}\Big|=O(\omega^{\gamma_0}),\quad\text{as}\quad\omega\to 0,
\end{align*}
where $\gamma_0=\min\{\beta_0,2\}$ and $\beta_0$ is the constant from~\eqref{cond:K:convergent-rate:diffusion};

\noindent (b) if $K(t)$ satisfies~\eqref{cond:K:convergent-rate:subdiffusion}, then
\begin{align*}
\Big|\frac{\rhat(\omega)}{\omega^{1-\alpha}}- \frac{\int_0^\infty\frac{\cos(z)}{z^\alpha}\d z}{\pi\beta C_\alpha\big[ \big(\int_0^\infty\frac{\cos(z)}{z^\alpha}\d z\big)^2+\big(\int_0^\infty\frac{\sin(z)}{z^\alpha}\d z\big)^2\big]}\Big|=O(\omega^{\gamma_\alpha}),\quad\text{as}\quad\omega\to 0,
\end{align*}
where $C_\alpha=\lim_{t\to\infty}t^{\alpha}K(t)$ (see \eqref{e:K:convergent-rate:subdiffusion}), $\gamma_\alpha=\min\{1-\alpha,\alpha\beta_\alpha\}$ and $\alpha,\,\beta_\alpha$ are the constants from~\eqref{cond:K:convergent-rate:subdiffusion}.
\end{proposition}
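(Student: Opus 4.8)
The plan is to feed the small-frequency asymptotics of $\Kcos$ and $\Ksin$ furnished by Lemmas~\ref{lem:Kcos:convergent-rate:diffusion} and~\ref{lem:Kcos:convergent-rate:subdiffusion} directly into the closed form~\eqref{eqn:spectral-density:Kcos-Ksin} for $\rhat$, and then propagate the error exponents through the rational function. Since $\rhat=\frac{1}{\pi}\,a/(a^2+b^2)$ with $a(\omega)=\beta\Kcos(\omega)$ and $b(\omega)=m\omega-\beta\Ksin(\omega)$, the key observation is that in each regime the relevant denominator converges to a strictly positive constant, so the whole estimate reduces to a first-order perturbation bound of the form $\big|\tfrac{a}{a^2+b^2}-\tfrac{a_0}{a_0^2+b_0^2}\big|=O(|a-a_0|+|b^2-b_0^2|)$, valid once the denominator is bounded away from zero.

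For part (a), I would set $a_0=\beta\Kcos(0)>0$ (finite and positive since $K\in L^1$ and by condition~\eqref{cond1c}) and $b_0=0$. Lemma~\ref{lem:Kcos:convergent-rate:diffusion} gives $|a(\omega)-a_0|=O(\omega^{\gamma_0})$ and $|b(\omega)|\leq m\omega+\beta|\Ksin(\omega)|=O(\omega^{\gamma_{0,1}})$, hence $b(\omega)^2=O(\omega^{2\gamma_{0,1}})$. Because $2\gamma_{0,1}=\min\{2\beta_0,2\}\geq\min\{\beta_0,2\}=\gamma_0$, the $b^2$ contribution is of order at least $\omega^{\gamma_0}$. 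Writing
\begin{equation*}
\rhat(\omega)-\frac{1}{\pi a_0}=\frac{1}{\pi}\,\frac{a(a_0-a)-b^2}{a_0(a^2+b^2)},
\end{equation*}
the numerator is $O(\omega^{\gamma_0})$ and the denominator tends to $a_0^3>0$, which yields the claimed bound.

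For part (b), the natural move is to rescale. Multiplying numerator and denominator of~\eqref{eqn:spectral-density:Kcos-Ksin} by $\omega^{2(1-\alpha)}$ and setting $P(\omega)=\omega^{1-\alpha}\beta\Kcos(\omega)$ and $Q(\omega)=\omega^{1-\alpha}\beta\Ksin(\omega)$, one obtains the clean identity
\begin{equation*}
\frac{\rhat(\omega)}{\omega^{1-\alpha}}=\frac{1}{\pi}\,\frac{P(\omega)}{P(\omega)^2+\big(m\omega^{2-\alpha}-Q(\omega)\big)^2}.
\end{equation*}
By Lemma~\ref{lem:Kcos:convergent-rate:subdiffusion}, $P(\omega)\to P_0:=\beta C_\alpha\int_0^\infty\frac{\cos z}{z^\alpha}\d z$ and $Q(\omega)\to Q_0:=\beta C_\alpha\int_0^\infty\frac{\sin z}{z^\alpha}\d z$, both with error $O(\omega^{\gamma_\alpha})$, and both limits are strictly positive (the integrals equal $\Gamma(1-\alpha)\sin(\alpha\pi/2)$ and $\Gamma(1-\alpha)\cos(\alpha\pi/2)$). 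The inertial term $m\omega^{2-\alpha}$ satisfies $2-\alpha>1-\alpha\geq\gamma_\alpha$, so it is absorbed into the $O(\omega^{\gamma_\alpha})$ error. The same perturbation identity as in part (a), now applied to $P/(P^2+(m\omega^{2-\alpha}-Q)^2)$ around $P_0/(P_0^2+Q_0^2)$, delivers the bound $O(\omega^{\gamma_\alpha})$ once the denominator is seen to stay bounded away from $P_0^2+Q_0^2>0$. The main obstacle I anticipate is purely the bookkeeping in part (b): one must track three distinct scales — the leading $\omega^{\alpha-1}$ blow-up of $\Kcos,\Ksin$, the $O(\omega^{\gamma_\alpha})$ correction, and the $O(\omega)$ inertial term — and confirm that after the $\omega^{1-\alpha}$ rescaling the inertial contribution is genuinely negligible relative to $\omega^{\gamma_\alpha}$. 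The positivity of $P_0^2+Q_0^2$, guaranteeing a non-degenerate denominator and hence the validity of the first-order expansion, is the one structural input that must be verified rather than merely computed.
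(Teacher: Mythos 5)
Your proposal is correct and follows essentially the same route as the paper: in both parts the paper plugs the rates from Lemmas~\ref{lem:Kcos:convergent-rate:diffusion} and~\ref{lem:Kcos:convergent-rate:subdiffusion} into the closed form~\eqref{eqn:spectral-density:Kcos-Ksin}, forms exactly your perturbation identity $\frac{a}{a^2+b^2}-\frac{a_0}{a_0^2+b_0^2}$ (after the same $\omega^{1-\alpha}$ rescaling in part (b), with $C_{\alpha,1},C_{\alpha,2}$ playing the role of your $P_0,Q_0$), and absorbs the inertial term as $O(\omega^{2-\alpha})$ with the denominator bounded away from zero. The only cosmetic difference is your justification of the positivity of $\int_0^\infty z^{-\alpha}\cos(z)\,\d z$ and $\int_0^\infty z^{-\alpha}\sin(z)\,\d z$ via the explicit values $\Gamma(1-\alpha)\sin(\alpha\pi/2)$ and $\Gamma(1-\alpha)\cos(\alpha\pi/2)$, where the paper instead cites the convexity criterion of \cite{tuck2006positivity}; both are valid.
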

\begin{proof} (a) Using formula~\eqref{eqn:spectral-density:Kcos-Ksin}, we see that
\begin{align*}
\pi\rhat(\omega) - \frac{1}{\beta \Kcos(0)}& = \frac{\beta\Kcos(\omega)}
{\left[\beta\Kcos(\omega) \right]^2
+\left[m\omega- \beta\Ksin(\omega) \right]^2}-\frac{1}{\beta \Kcos(0)}\\
&= \frac{\beta^2\Kcos(\omega)[\Kcos(0)-\Kcos(\omega)]+\left[m\omega-\beta\Ksin(\omega) \right]^2}
{\beta\Kcos(0)\left(\left[\beta\Kcos(\omega) \right]^2
+\left[m\omega-\beta\Ksin(\omega) \right]^2\right)}
\end{align*}
In view of Lemma~\ref{lem:fcos-fsin:well-defined}, as $\omega\to 0$, $\Kcos(\omega)$ converges to $\Kcos(0)=\int_0^\infty K(t)\d t>0$. It follows that for $\omega>0$ sufficiently small,
\begin{align*}
\Big|\pi\rhat(\omega) - \frac{1}{\beta \Kcos(0)}\Big|\leq c |\Kcos(0)-\Kcos(\omega)|+c\left[m\omega-\beta\Ksin(\omega) \right]^2.
\end{align*}
We now invoke Lemma~\ref{lem:Kcos:convergent-rate:diffusion} to obtain
\begin{align*}
\Big|\pi\rhat(\omega) - \frac{1}{\beta \Kcos(0)}\Big|\leq c(\omega^{\gamma_{0}}+\omega^2+\omega^{2\gamma_{0,1}})=O(\omega^{\gamma_0}),
\end{align*}
since $\gamma_0=\min\{\beta_0,2\}\leq 2\gamma_{0,1}=2\min\{\beta_0,1\}\leq 2$ as in Lemma~\ref{lem:Kcos:convergent-rate:diffusion}. This concludes the proof of part (a).

In regard to part (b), to simplify the notation we set
\begin{align} \label{eqn:C-alpha-cos-sin}
C_{\alpha,1} = C_\alpha\int_0^\infty\frac{\cos(z)}{z^\alpha}\d z,\quad\text{and}\quad C_{\alpha,2} = C_\alpha\int_0^\infty\frac{\sin(z)}{z^\alpha}\d z.
\end{align}
We note that since $z^{-\alpha}$ is concave up and decreasing on $(0,\infty)$, two integrals above are positive (see~\cite{tuck2006positivity}) and so are $C_{\alpha,1}$ and $C_{\alpha,2}$. Then, using formua~\eqref{eqn:spectral-density:Kcos-Ksin} again, we have
\begin{multline*}
\frac{\pi\rhat(\omega)}{\omega^{1-\alpha}}-\frac{\int_0^\infty\frac{\cos(z)}{z^\alpha}\d z}{\beta C_\alpha\big[ \big(\int_0^\infty\frac{\cos(z)}{z^\alpha}\d z\big)^2+\big(\int_0^\infty\frac{\sin(z)}{z^\alpha}\d z\big)^2\big]}
\\ = \frac{\beta\omega^{1-\alpha}\Kcos(\omega)}
{\left[\beta\omega^{1-\alpha}\Kcos(\omega) \right]^2
+\left[m\omega^{2-\alpha}- \beta\omega^{1-\alpha}\Ksin(\omega) \right]^2}-\frac{C_{\alpha,1}}{\beta (C_{\alpha,1}^2+C_{\alpha,2}^2)}.
\end{multline*}
After subtraction, the numerator of the right-hand side above is written as
\begin{multline*}
\beta^2\big[C_{\alpha,1}\omega^{1-\alpha}\Kcos(\omega) -C_{\alpha,2}^2\big]\big[C_{\alpha,1}-\omega^{1-\alpha}\Kcos(\omega)\big]\\-C_{\alpha,1}\omega^{2-\alpha}\big[m^2\omega^{2-\alpha}-2m\beta\omega^{1-\alpha}\Ksin(\omega)\big]
+\beta^2C_{\alpha,1}\big[C_{\alpha,2}^2-(\omega^{1-\alpha}\Ksin(\omega))^2\big].
\end{multline*}
In view of Lemma~\ref{lem:Kcos:convergent-rate:subdiffusion}, as $\omega\to 0$, $\omega^{1-\alpha}\Kcos(\omega)$ and $\omega^{1-\alpha}\Ksin(\omega)$ converge to $C_{\alpha,1}$ and $C_{\alpha,2}$, respectively. Similar to part (a), we arrive at the following estimate
\begin{multline*}
\frac{\pi\rhat(\omega)}{\omega^{1-\alpha}}-\frac{\int_0^\infty\cos(z)/z^\alpha\d z}{\beta C_\alpha\int_0^\infty(\cos(z)+\sin(z))/z^\alpha\d z}\\ \leq c|\omega^{1-\alpha}\Kcos(\omega)-C_{\alpha,1}|+c|\omega^{1-\alpha}\Ksin(\omega)-C_{\alpha,2}|+O(\omega^{2-\alpha}),
\end{multline*}
whence
\begin{align*}
\frac{\pi\rhat(\omega)}{\omega^{1-\alpha}}-\frac{\int_0^\infty\cos(z)/z^\alpha\d z}{\beta C_\alpha\int_0^\infty(\cos(z)+\sin(z))/z^\alpha\d z}=O(\omega^{\gamma_\alpha}+\omega^{2-\alpha})=O(\omega^{\gamma_\alpha}),
\end{align*}
where $0<\gamma_\alpha<2-\alpha$ is the constant from Lemma~\ref{lem:Kcos:convergent-rate:subdiffusion}. The proof is thus complete.
\end{proof}

We are now in a position to prove Theorem~\ref{thm:convergent-rate:diffusion+subdiffusion}.

\begin{proof}[Proof of Theorem~\ref{thm:convergent-rate:diffusion+subdiffusion}] We first show (a). By making the change of variable $z=t\omega$, recast expression~\eqref{eqn:X(t)} as
\begin{align} \label{eqn:thm:convergent-rate:diffusion+subdiffusion:1}
\frac{\E{X(t)^2}}{t} = 4\int_0^\infty \frac{1-\cos(z)}{z^2}\rhat\Big(\frac{z}{t}\Big)\d z.
\end{align}
Therefore, for sufficiently small $\epsilon>0$ and large enough $t$,
\begin{align} \label{eqn:thm:convergent-rate:diffusion+subdiffusion:1-three_integ}
\frac{\E{X(t)^2}}{t}-\frac{4}{\pi\beta\Kcos(0)}&\int_0^\infty\frac{1-\cos(z)}{z^2} = 4\int_0^\infty \frac{1-\cos(z)}{z^2}\Big[\rhat\Big(\frac{z}{t}\Big)-\frac{1}{\pi\beta\Kcos(0)}\Big]\d z \nonumber \\
&=4 \Big\{\int_0^1+\int_1^{\epsilon t}+\int_{\epsilon t}^\infty \Big\} \Big[\rhat\Big(\frac{z}{t}\Big)-\frac{1}{\pi\beta\Kcos(0)}\Big]\d z.
\end{align}
We now construct bounds for each integral term on the right-hand side of \eqref{eqn:thm:convergent-rate:diffusion+subdiffusion:1-three_integ}. In view of the proof of Theorem 6.1 \cite[p.\ 5149]{mckinley2018anomalous}, when $K$ is integrable, $\rhat(\omega)$ is bounded on $(0,\infty)$. It follows that
\begin{align}\label{e:int^infty-eps*t}
\Big| \int_{\epsilon t}^\infty \frac{1-\cos(z)}{z^2}\Big[\rhat\Big(\frac{z}{t}\Big)-\frac{1}{\pi\beta\Kcos(0)}\Big]\d z \Big|\leq c\int_{\epsilon t}^\infty\frac{1}{z^2}\d z=O(t^{-1}).
\end{align}
By Proposition~\ref{prop:convergent-rate:diffusion+subdiffusion}, (a), and the fact that $(1-\cos(z))/z^2$ is bounded on $\rbb$, we obtain
\begin{align}\label{e:int^1-0}
\Big|\int_0^1 \frac{1-\cos(z)}{z^2}\Big[\rhat\Big(\frac{z}{t}\Big)-\frac{1}{\pi\beta\Kcos(0)}\Big]\d z\Big|\leq \frac{c}{t^{\gamma_0}}\int_0^1 z^{\gamma_0}\d z=O(t^{-\gamma_0}).
\end{align}
Likewise,
\begin{align}\label{e:int^esp*t-1}
\Big|\int_1^{\epsilon t}\frac{1-\cos(z)}{z^2}\Big[\rhat\Big(\frac{z}{t}\Big)-\frac{1}{\pi\beta\Kcos(0)}\Big]\d z \Big|\leq \frac{c}{t^{\gamma_0}}\int_1^{\epsilon t}\frac{1}{z^{2-\gamma_0}}\d z  \leq \frac{c}{t^{\gamma_0/2}},
\end{align}
where the last implication holds for any $\gamma_0\in[0,2]$. Expressions \eqref{eqn:thm:convergent-rate:diffusion+subdiffusion:1-three_integ}--\eqref{e:int^esp*t-1} imply that
\begin{align*}
\Big|\frac{\E{X(t)^2}}{t}-\frac{4}{\pi\beta\Kcos(0)}\int_0^\infty\frac{1-\cos(z)}{z^2}\d z \Big| =O(t^{-1}+t^{-\gamma_0}+t^{-\gamma_0/2})=O(t^{-\gamma_0/2}).
\end{align*}
Since, in addition,
\begin{equation}\label{e:integ_(1-cos)/z2}
\int_0^\infty  \frac{1-\cos(z)}{z^2} dz = \frac{\pi}{2}
\end{equation}
\cite[p.\ 447, (3.782.2)]{gradshteyn:ryzhik:2007}, then \eqref{e:MSD_III} holds. %which completes the proof of part (a).

To show part (b), on the subdiffusive regime, we employ the same technique as the one used in part (a). In this situation, by analogy to~\eqref{eqn:thm:convergent-rate:diffusion+subdiffusion:1}, we see that
\begin{align*}
\frac{\E{X(t)^2}}{t^\alpha} = 4\int_0^\infty \frac{1-\cos(z)}{z^{1+\alpha}}\hspace{1mm}\frac{\rhat\left(\frac{z}{t}\right)}{\left(\frac{z}{t}\right)^{1-\alpha}}\d z.
\end{align*}
As in the proof of part (a), fix a small $\epsilon > 0$ and a large enough $t$. Thus,
\begin{align}\label{e:t^alpha_three_integ}
\MoveEqLeft[6]
\frac{\E{X(t)^2}}{t^\alpha} -\frac{4C_{\alpha,1}}{\pi\beta(C_{\alpha,1}^2+C_{\alpha,2}^2)}\int_0^\infty\frac{1-\cos(z)}{z^{1+\alpha}}\d z \nonumber\\
&= 4\int_0^\infty \frac{1-\cos(z)}{z^{1+\alpha}}\bigg[\frac{\rhat\left(\frac{z}{t}\right)}{\left(\frac{z}{t}\right)^{1-\alpha}}-\frac{C_{\alpha,1}}{\pi\beta(C_{\alpha,1}^2+C_{\alpha,2}^2)}\bigg]\d z \nonumber\\
&=4\Big\{\int_0^1\close+\int_1^{\epsilon t}\close+\int_{\epsilon t}^\infty \Big\} \frac{1-\cos(z)}{z^{1+\alpha}}\bigg[\frac{\rhat\left(\frac{z}{t}\right)}{\left(\frac{z}{t}\right)^{1-\alpha}}-\frac{C_{\alpha,1}}{\pi\beta(C_{\alpha,1}^2+C_{\alpha,2}^2)}\bigg]\d z\nonumber\\
&=: 4(I_0+I_1+I_2)
\end{align}
We now provide bounds on each term on the right-hand side of \eqref{e:t^alpha_three_integ}. First note that, by Proposition~\ref{prop:convergent-rate:diffusion+subdiffusion}, (b),
\begin{align}\label{e:t^alpha_three_integ_I0}
I_0 =O(t^{-\gamma_\alpha}).
\end{align}
However, Proposition~\ref{prop:convergent-rate:diffusion+subdiffusion}, (b), also implies that $\rhat(\omega)/\omega^{1-\alpha}$ is bounded on $(0,\infty)$. By a similar argument to the one used in part (a), we readily obtain
\begin{align}\label{e:t^alpha_three_integ_I2}
|I_2|\leq c\int_{\epsilon t}^\infty\close\frac{1}{z^{1+\alpha}}\d z = O(t^{-\alpha}).
\end{align}
In addition, for $\gamma_\alpha=\min\{1-\alpha,\alpha\beta_\alpha\}$,
\begin{align}\label{e:t^alpha_three_integ_I1}
|I_1|\leq \frac{c}{t^{\gamma_\alpha}}\int_1^{\epsilon t}\close \frac{1}{z^{1+\alpha-\gamma_\alpha}}\d z =O(t^{-\gamma_\alpha/2}+t^{-\alpha/2}),
\end{align}
where the equality holds for any $\alpha,\,\gamma_\alpha\in(0,1)$. Expressions \eqref{e:t^alpha_three_integ}--\eqref{e:t^alpha_three_integ_I1} imply that
\begin{equation}\label{e:t^alpha:conclude}
\begin{aligned}
\MoveEqLeft[2]\Big|\frac{\E{X(t)^2}}{t^\alpha} -\frac{4C_{\alpha,1}}{\pi\beta(C_{\alpha,1}^2+C_{\alpha,2}^2)}\int_0^\infty\!\frac{1-\cos(z)}{z^{1+\alpha}}\d z \Big|\\
& = O(t^{-\alpha/2}+t^{-\gamma_\alpha/2})=O(t^{-\eta/2}),
\end{aligned}
\end{equation}
where $\eta=\min\{\alpha,\gamma_\alpha\}$. To simplify the limiting constant in \eqref{e:t^alpha:conclude}, consider again $C_{\alpha,1}$ and $C_{\alpha,2}$ as in \eqref{eqn:C-alpha-cos-sin}. From \cite[p.\ 460, (3.823)]{gradshteyn:ryzhik:2007},
$$
\int_0^\infty\!\frac{1-\cos(z)}{z^{1+\alpha}}\d z = -\Gamma(-\alpha) \cos\Big( \frac{\alpha \pi}{2}\Big),
$$
and from \cite[p.\ 10, (1)]{batemantable} and \cite[p.\ 68, (1)]{batemantable}, respectively,
$$
\int_0^\infty \frac{\cos(z)}{z^\alpha}\d z=\frac{\pi}{2\Gamma(\alpha)\cos(\alpha\pi/2)},\quad \int_0^\infty \frac{\sin(z)}{z^\alpha}\d z=\frac{\pi}{2\Gamma(\alpha)\sin(\alpha\pi/2)}.
$$
Then, by Euler's reflection formula,
\begin{align*}
\MoveEqLeft[2]\frac{4C_{\alpha,1}}{\pi\beta(C_{\alpha,1}^2+C_{\alpha,2}^2)}\int_0^\infty\!\frac{1-\cos(z)}{z^{1+\alpha}}\d z\\
&= \frac{\frac{4\pi}{2\Gamma(\alpha)\cos(\alpha\pi/2) }   }{ \pi\beta C_{\alpha}\big[ \big(\frac{\pi}{2\Gamma(\alpha)\cos(\alpha\pi/2) }\big)^2+\big(\frac{\pi}{2\Gamma(\alpha)\sin(\alpha\pi/2) }\big)^2  \big]  }\big(-\Gamma(-\alpha)\cos(\alpha\pi/2) \big)\\
&=\frac{2\sin(\alpha\pi)}{\alpha\pi\beta C_\alpha
},
\end{align*}
which is the constant appearing in expression \eqref{e:case_IV_MSD/t^(alpha)-const}. This establishes (b).
\end{proof}

We finish this section by providing the proof of Theorem~\ref{thm:convergent-rate:critical} in the critical regime.
\begin{proof}[Proof of Theorem~\ref{thm:convergent-rate:critical} ] We recall from~\eqref{eqn:thm:asymptotic:critical:1} that
\begin{align*}
\frac{\log(t) \E{X(t)^2}}{t}=4\log(t)\int_0^\infty\! \frac{1-\cos(z)}{z^2}
\widehat{r}\left(\frac{z}{t}\right)\d z,
\end{align*}
whence
\begin{align}\label{e:EX2/t/logt-const}
\MoveEqLeft[7]\frac{\log(t)\E{X(t)^2}}{t}-\frac{4}{\pi\beta C_1}\int_0^\infty  \frac{1-\cos(z)}{z^2}
\d z \nonumber\\
&=4\int_0^\infty\Big[\log(t)\rhat\left(\frac{z}{t}\right)-\frac{1}{\pi\beta C_1}\Big]\frac{1-\cos(z)}{z^2}\d z \nonumber\\
&= 4 \Big\{\int_0^{\log(t)^{-2}}\close\close+\int_{\log(t)^{-2}}^{\log(t)^2}+\int_{\log(t)^2}^\infty \Big\} \Big[\log(t)\rhat\left(\frac{z}{t}\right)-\frac{1}{\pi\beta C_1}\Big]\frac{1-\cos(z)}{z^2}\d z \nonumber\\
&=4(I_0+I_1+I_2).
\end{align}
We now construct bounds on each term on the right-hand side of \eqref{e:EX2/t/logt-const}. We first consider $I_0$ and $I_2$, as they are easier to handle compared with $I_1$. To derive a bound on $I_0$, recall from the proof of Theorem~\ref{thm:well-posed:critical} that $\rhat(\omega)$ is uniformly bounded on $(0,\infty)$. Then,
\begin{align}\label{e:EX2/t/logt-const_I0}
|I_0|&= \int_0^{\log(t)^{-2}}\close\close \Big|\log(t)\rhat\left(\frac{z}{t}\right)-\frac{1}{\pi\beta C_1}\Big|\frac{1-\cos(z)}{z^2}\d z \nonumber\\
&\leq c(\log(t)+1)\int_0^{\log(t)^{-2}}\close\close1\,\d z =O(\log(t)^{-1}).
\end{align}
Likewise, in regard to $I_2$,
\begin{align}\label{e:EX2/t/logt-const_I2}
|I_2|& \leq \int_{\log(t)^{2}}^\infty \Big|\log(t)\rhat\left(\frac{z}{t}\right)-\frac{1}{\pi\beta C_1}\Big|\frac{1-\cos(z)}{z^2}\d z \nonumber\\
&\leq c(\log(t)+1)\int_{\log(t)^{2}}^\infty\frac{1}{z^2}\d z =O(\log(t)^{-1}).
\end{align}
Turning to $I_1$, expression~\eqref{eqn:spectral-density:Kcos-Ksin} for $\widehat{r}(\omega)$ implies that

\begin{align}\label{e:log(t)r-hat(z/t)-const}
\log(t)\rhat\left(\frac{z}{t}\right)-\frac{1}{\pi\beta C_1}&=\frac{1}{\pi}\Big[\frac{\beta\log(t)\Kcos\left(\frac{z}{t}\right)}
{\left[\beta\Kcos\left(\frac{z}{t}\right) \right]^2
+\left[m\left(\frac{z}{t}\right)- \beta\Ksin\left(\frac{z}{t}\right) \right]^2}-\frac{1}{\beta C_1}\Big] \nonumber \\
&=\frac{1}{\pi\beta C_1}\Big[\frac{\beta^2 C_1\log(t)\Kcos\left(\frac{z}{t}\right)-\left[\beta\Kcos\left(\frac{z}{t}\right) \right]^2}
{\left[\beta\Kcos\left(\frac{z}{t}\right) \right]^2
+\left[m\left(\frac{z}{t}\right)- \beta\Ksin\left(\frac{z}{t}\right) \right]^2} \nonumber \\
&\qquad\qquad \qquad\qquad-\frac{\left[m\left(\frac{z}{t}\right)- \beta\Ksin\left(\frac{z}{t}\right) \right]^2}
{\left[\beta\Kcos\left(\frac{z}{t}\right) \right]^2
+\left[m\left(\frac{z}{t}\right)- \beta\Ksin\left(\frac{z}{t}\right) \right]^2}\Big].
\end{align}
However, Proposition~\ref{prop:asymptotic:Fcos-sin:critical} implies that $\limsup_{\omega\to 0}\Ksin(\omega)^2<\infty$ and $\Kcos(\omega)\sim|\log(\omega)|$ as $\omega\to 0$. Therefore, for every $z\in[\log(t)^{-2},\log(t)^2]$ and large enough $t$, the second term on the right-hand side of \eqref{e:log(t)r-hat(z/t)-const} is bounded in absolute value by
%\begin{multline*}
%\frac{\left[m\left(\frac{z}{t}\right)- \beta\Ksin\left(\frac{z}{t}\right) \right]^2}
%{\left[\beta\Kcos\left(\frac{z}{t}\right) \right]^2
%+\left[m\left(\frac{z}{t}\right)- \beta\Ksin\left(\frac{z}{t}\right) \right]^2}\\ \leq
$$
\frac{c} {\left|\Kcos\left(\frac{z}{t}\right)\right|^2}\leq \frac{c}{\left|\log\left(\frac{z}{t}\right)\right|^2}\leq \frac{c}{|\log(t)-2\log(\log(t))|^2}=O(|\log(t)|^{-2}).
$$
%\end{multline*}
To obtain a similar bound for the first term on the right-hand side of \eqref{e:log(t)r-hat(z/t)-const}, note that
\begin{align*}
\frac{\beta^2 C_1\log(t)\Kcos\left(\frac{z}{t}\right)-\left[\beta\Kcos\left(\frac{z}{t}\right) \right]^2}
{\left[\beta\Kcos\left(\frac{z}{t}\right) \right]^2
+\left[m\left(\frac{z}{t}\right)- \beta\Ksin\left(\frac{z}{t}\right) \right]^2}&\leq  \frac{\beta^2 C_1\log(t)\Kcos\left(\frac{z}{t}\right)-\left[\beta\Kcos\left(\frac{z}{t}\right) \right]^2}
{\left[\beta\Kcos\left(\frac{z}{t}\right) \right]^2}\\
&\leq \frac{\left|\Kcos\left(\frac{z}{t}\right) - C_1\log(t)\right|}
{\Kcos\left(\frac{z}{t}\right) }\\
&\leq \frac{\left|\Kcos\left(\frac{z}{t}\right) - C_1\log\left(\frac{t}{z}\right)\right|}{\Kcos\left(\frac{z}{t}\right) }+\frac{C_1|\log(z)|}{\Kcos\left(\frac{z}{t}\right)}.
\end{align*}
Again for $z\in[\log(t)^{-2},\log(t)^2]$ and large enough $t$, Proposition~\ref{prop:asymptotic:Fcos-sin:critical} implies that
\begin{align*}
\frac{C_1|\log(z)|}{\Kcos\left(\frac{z}{t}\right)}\leq \frac{c\log(z)}{\log\left(\frac{t}{z}\right)}\leq \frac{c\log(\log(t))}{\log(t)-2\log(\log(t))}=O(\log(t)^{-1}),
\end{align*}
%where in the first implication, we have employed the fact that $\Ksin(\omega)\sim|\log(\omega)|$ as $\omega\to 0$ from Proposition~\ref{prop:asymptotic:Fcos-sin:critical}.
Also, by Lemma~\ref{lem:Kcos:convergent-rate:critical},
\begin{align*}
\frac{\left|\Kcos\left(\frac{z}{t}\right) - C_1\log\left(\frac{t}{z}\right)\right|}{\Kcos\left(\frac{z}{t}\right) }\leq \frac{c}{\Kcos\left(\frac{z}{t}\right) }=\frac{c}{\log\left(\frac{t}{z}\right)} \leq \frac{c}{\log(t)-2\log(\log(t))}=O(\log(t)^{-1}).
\end{align*}
Therefore,
\begin{align}\label{e:EX2/t/logt-const_I1}
|I_1 |&= \Big|\int_{\log(t)^{-2}}^{\log(t)^2}\Big[\log(t)\rhat\left(\frac{z}{t}\right)-\frac{1}{\pi\beta C_1}\Big]\frac{1-\cos(z)}{z^2}\d z \Big| \nonumber\\
&\leq \frac{c}{\log(t)}\int_{\log(t)^{-2}}^{\log(t)^2}\frac{1-\cos(z)}{z^2}\d z=O(\log(t)^{-1}).
\end{align}
Expressions \eqref{e:EX2/t/logt-const}--\eqref{e:EX2/t/logt-const_I1} imply that
\begin{align}\label{e:log(t)EX^2/t-const}
\Big|\frac{\log(t)\E{X(t)^2}}{t}-\frac{4}{\pi\beta C_1}\int_0^\infty  \frac{1-\cos(z)}{z^2}
\d z \Big|=4|I_0+I_1+I_2|=O(\log(t)^{-1}),
\end{align}
as $t\to\infty$. Relations \eqref{e:log(t)EX^2/t-const} and \eqref{e:integ_(1-cos)/z2} establish \eqref{e:convergent-rate:critical}.
\end{proof}

\section{Conclusion}\label{sec:discuss}

The GLE is a universal model for particle velocity in a viscoelastic medium. In this paper, we consider the GLE with power law decay memory kernel. We show that, in the critical regime where the memory kernel decays like $1/t$ as $t \rightarrow \infty$, the MSD of particle motion grows linearly in time up to a slowly varying (logarithm) term. Moreover, we use the theory of stationary random distributions to establish the well-posedness of the GLE in this regime. This solves an open problem from~\cite{mckinley2018anomalous} and completes the answer to the conjecture put forward in~\cite{morgado2002relation} on the relationship between memory kernel decay and anomalously diffusive behavior. Under slightly stronger assumptions on the memory kernel, we construct an Abelian-Tauberian framework to provide robust bounds on the deviation of the MSD around its asymptotic trend. This bridges the gap between the GLE memory kernel and the spectral density of anomalously diffusive particle motion characterized in \cite{didier2017asymptotic}.

The work in this paper leads to a number of future research directions. As mentioned in~\cite{mckinley2018anomalous}, it is an open question whether conditions such as \eqref{cond1} and~\eqref{cond:powerlaw:t} are not only sufficient, but also necessary for characterizing the growth rate of the MSD. Although sufficient and necessary conditions on the relationship between the memory kernel $K$ and its Fourier transforms $\Kcos$ and $\Ksin$ are fully provided in Propositions~\ref{prop:asymptotic:Fcos-sin:critical} and~\ref{prop:asymptotic:Fcos-sin:critical:Tauberian}, it remains an open problem to construct analogous necessary conditions for $\Kcos$, $\Ksin$ vis-\`{a}-vis the spectral density $\rhat$ in~\eqref{eqn:spectral-density:Kcos-Ksin}, or for $\rhat$ vis-\`{a}-vis the MSD $\E{X(t)^2}$ in~\eqref{eqn:X(t)}.

A related research topic that is of direct interest for experimental data analysis is to establish the asymptotic distribution of the time-averaged mean squared displacement statistic under the three GLE regimes by drawing upon the analytical framework developed in this paper. This would clarify or extend the connection between the GLE and the results in~\cite{didier2017asymptotic}, and is the topic of future work.

%We do not have results on the convergence of the process $X(t)$ in any other means, namely weak convergence as studied in~\cite{didier2017asymptotic} for the processes therein. It therefore remains an open question whether $X(t)$ weakly converges to any limiting under appropriate scaling.

\section*{Acknowledgements}  We would like to thank Scott A.\ McKinley for suggesting the problems. H.N.\ gratefully acknowledges support through the NSF DMS-1612898 grant. G.D.\ was partially supported by the prime award no.\ W911NF--14--1--0475 from the Biomathematics subdivision of the Army Research Office, USA. We are also grateful to two anonymous reviewers for their comments and suggestions.

\bibliography{power-law-bib}
\bibliographystyle{plain}

\end{document}